\newtheorem{tm}{Theorem}[section]
\newtheorem{lm}[tm]{Lemma}
\newtheorem{df}[tm]{Definition}
\newtheorem{pr}[tm]{Proposition}
\newcommand{\subscripts}[3]{%
  \@mathmeasure\z@\displaystyle{#2}%
  \global\setbox\@ne\vbox to\ht\z@{}\dp\@ne\dp\z@
  \setbox\tw@\box\@ne
  \@mathmeasure4\displaystyle{\copy\tw@_{#1}}%
  \@mathmeasure6\displaystyle{{#2}_{#3}}%
  \dimen@-\wd6 \advance\dimen@\wd4 \advance\dimen@\wd\z@
  \hbox to\dimen@{}\mathop{\kern-\dimen@\box4\box6}%
}
\newcommand{\nn}{\nonumber}
\newcommand{\x}{\bm{\mathrm{x}}}
\newcommand{\ve}{\varepsilon}
\newcommand{\dis}{\displaystyle}
\newcommand{\dd}{\mathrm{d}}
\newcommand{\N}{\mathbb{N}}
\newcommand{\E}{\mathbb{E}}
\begin{document}
\title[A Positive linear operator via the Moran model]{
On some properties of a positive linear operator 
via the Moran model in population genetics
}
\author[T. Aoyama]{Takahiro Aoyama}
\author[R. Namba]{Ryuya Namba}
\date{\today}
\address[T. Aoyama]{Department of Applied Mathematics, 
Faculty of Science, 
Okayama University of Science, 
1-1 Ridaicho, Kita-ku, Okayama 700-0005, Japan}
\email{{\tt{aoyama@ous.ac.jp}}}
\address[R. Namba]{Department of Mathematics,
Faculty of Science,
Kyoto Sangyo University, Motoyama, Kamigamo, Kita-ku, Kyoto, 
603-8555, Japan}
\email{{\tt{rnamba@cc.kyoto-su.ac.jp}}}
\subjclass[2020]{Primary 60J70; Secondary 41A36, 60G53, 60F05.}
\keywords{Moran model;  Moran operator; Wright--Fisher diffusion.}

%
%
\begin{abstract}

We introduce a positive linear operator 
acting on the Banach space of all continuous functions on the unit interval 
via the Moran model studied in population genetics.
We show that this operator, named the Moran operator, uniformly approximates every continuous function on the unit interval.  
Furthermore, some limit theorems for the iterates of 
the Moran operator are obtained. 
\end{abstract}

\maketitle


\section{{\bf Introduction}}
\label{Sect:Introduction}

    Let $C([0, 1])$ be the Banach space of 
    all continuous functions on $[0, 1]$ with usual supremum norm $\|\cdot\|_\infty$. 
    One of well known positive linear operators acting on 
    $C([0, 1])$ is the {\it Bernstein operator}, 
    which is defined as follows.
    
    \begin{df}[Bernstein operator]
    \label{Def:operators-1D}
       For $n \in \N$, the Bernstein operator  
       $B_n$ is defined by 
       \[
       B_n f(x) 
        = \sum^{n}_{k=0}\binom{n}{k}x^k (1-x)^{n-k} f\left(\frac{k}{n}\right),
        \qquad f \in C([0,1]), \, x \in [0,1].
       \]
    \end{df}
    
    The Bernstein operator originally introduced in \cite{Bernstein}
    to provide a constructive proof of the celebrated 
    {\it Weierstrass' approximation theorem}. 
    
    \begin{pr}[cf.~\cite{Bernstein}]
    \label{Prop:approximation}
    For any $f \in C([0, 1])$, we have
    \[
    \lim_{n \to \infty}\|B_nf-f\|_\infty=0. 
    \]
    \end{pr}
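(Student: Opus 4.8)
The plan is to exploit the probabilistic structure hidden in the binomial weights. First I would observe that, for fixed $x \in [0,1]$, the coefficients $\binom{n}{k}x^k(1-x)^{n-k}$ are exactly the point masses of a binomial law $\mathrm{Bin}(n,x)$; equivalently, writing $S_n = X_1 + \cdots + X_n$ with $X_1, \dots, X_n$ i.i.d.\ Bernoulli$(x)$ random variables, one has the representation
\[
B_n f(x) = \E\Big[f\Big(\tfrac{S_n}{n}\Big)\Big].
\]
From this I would extract two elementary facts. The binomial theorem gives that $B_n$ fixes the constant function $1$, and the identities $\E[S_n] = nx$, $\Var(S_n) = nx(1-x)$ yield the variance bound
\[
\E\Big[\Big(\tfrac{S_n}{n} - x\Big)^2\Big] = \frac{x(1-x)}{n} \le \frac{1}{4n},
\]
uniformly in $x \in [0,1]$. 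These moment identities are the whole engine of the proof; they can also be obtained purely analytically by differentiating $\sum_{k}\binom{n}{k}x^k(1-x)^{n-k} = 1$ once and twice in $x$.

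The core of the argument is a splitting estimate driven by uniform continuity. Since $[0,1]$ is compact, $f$ is uniformly continuous and bounded; fix $\varepsilon > 0$, choose $\delta > 0$ so that $|f(y) - f(x)| < \varepsilon$ whenever $|y - x| < \delta$, and put $M = \|f\|_\infty$. Using $B_n 1 = 1$ and the triangle inequality I would write
\[
|B_n f(x) - f(x)| \le \E\Big[\big|f(S_n/n) - f(x)\big|\Big]
\]
and then split the expectation according to whether $|S_n/n - x| < \delta$ or $|S_n/n - x| \ge \delta$. On the first event the integrand is at most $\varepsilon$; on the second it is at most $2M$, while the probability of that event is controlled by Chebyshev's inequality together with the variance bound, namely $\Prob(|S_n/n - x| \ge \delta) \le \tfrac{1}{4n\delta^2}$.

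Combining the two contributions yields the uniform estimate
\[
\|B_n f - f\|_\infty \le \varepsilon + \frac{M}{2n\delta^2},
\]
valid for every $n \in \N$, and letting $n \to \infty$ and then $\varepsilon \to 0$ finishes the proof. I expect the only genuine subtlety to be the moment computation $\Var(S_n) = nx(1-x)$ and its role in the Chebyshev step, since the value $\tfrac14$ of its maximum over $x \in [0,1]$ is precisely what decouples the far-term bound from $x$; the remainder is bookkeeping. The probabilistic formulation has the further virtue of making transparent the law-of-large-numbers heuristic $S_n/n \to x$ underlying the whole statement, and it is the viewpoint I expect to generalize most readily to the Moran operator treated later in the paper.
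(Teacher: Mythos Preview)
Your argument is correct and is precisely the probabilistic proof the paper has in mind: it does not supply its own proof but simply notes that ``the key \ldots\ is to make use of the weak law of large numbers for the binomially distributed random variables'' and refers to \cite[Example~5.15]{Klenke}, which is the representation $B_nf(x)=\E[f(S_n/n)]$ together with the Chebyshev/uniform-continuity splitting you carry out. Nothing further is needed.
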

    \noindent
    The key to show this proposition is to make use of the 
    weak law of large numbers for the binomially distributed random variables. 
    We refer to \cite[Example 5.15]{Klenke} 
    for the probabilistic proof.

    We now focus on the limit of the $k$-fold iterates $B_n^k$ of the Bernstein operator $B_n$. 
    As far as we know, 
    Kelisky and Rivlin first gave a limit theorem 
    for $B_n^k$ as $k \to \infty$.  

    \begin{pr}[cf. {\cite[Theorem 1]{KR67}}]
    \label{Prop:Kelisky-Rivlin}
        Let $n \in \N$ be fixed. 
        For $f \in C([0, 1])$, we have 
        \[
        \lim_{k \to \infty}
        \max_{x \in [0, 1]}
        |B_n^k f(x) - \{f(0)(1-x)+f(1)x\}|=0. 
        \]
    \end{pr}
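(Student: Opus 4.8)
The plan is to exploit that $B_n$ sends all of $C([0,1])$ into the finite-dimensional space $\mathcal{P}_n$ of polynomials of degree at most $n$, and that $\mathcal{P}_n$ is $B_n$-invariant; hence $B_n^k f \in \mathcal{P}_n$ for every $k \ge 1$, and the problem reduces to understanding the single endomorphism $B_n|_{\mathcal{P}_n}$ of an $(n+1)$-dimensional space. In the probabilistic spirit of Proposition~\ref{Prop:approximation}, I would write $B_n f(x) = \E[f(S/n)]$ with $S \sim \mathrm{Bin}(n,x)$, so that the $k$-th iterate $B_n^k f(x) = \E_x[f(Y_k)]$ is the expectation of $f$ along the Wright--Fisher Markov chain $(Y_k)_{k \ge 0}$ started at $Y_0 = x$, whose one-step transition from a point $y \in [0,1]$ is the law of $S/n$ with $S \sim \mathrm{Bin}(n,y)$; note that $Y_k$ lies in the lattice $\{0, 1/n, \dots, 1\}$ for $k \ge 1$.

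The limit is then identified by a martingale argument. The binomial mean gives $\E[Y_{k+1} \mid Y_k] = Y_k$, so $(Y_k)$ is a bounded martingale and converges almost surely to some $Y_\infty$. Because $0$ and $1$ are absorbing and every interior lattice point has positive probability of jumping into $\{0,1\}$ in one step, the chain is eventually absorbed, so $Y_\infty \in \{0,1\}$ almost surely. The martingale identity $\E_x[Y_k] = x$ together with bounded convergence gives $\E_x[Y_\infty] = x$, which in view of $Y_\infty \in \{0,1\}$ forces $\Prob_x(Y_\infty = 1) = x$ and $\Prob_x(Y_\infty = 0) = 1-x$. Applying bounded convergence once more yields, for each fixed $x \in [0,1]$,
\[
B_n^k f(x) = \E_x[f(Y_k)] \xrightarrow[k\to\infty]{} f(0)(1-x) + f(1)x .
\]

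To promote this pointwise limit to the uniform statement of the proposition I would use finite-dimensionality. Both $B_n^k f$ and the limit $L(x) := f(0)(1-x) + f(1)x$ lie in $\mathcal{P}_n$, and evaluation at the $n+1$ distinct nodes $0, 1/n, \dots, 1$ is a linear isomorphism $\mathcal{P}_n \to \R^{n+1}$. Hence pointwise convergence at these nodes forces the coefficient vectors of $B_n^k f$ to converge to those of $L$, and since all norms on the finite-dimensional space $\mathcal{P}_n$ are equivalent, $\|B_n^k f - L\|_\infty \to 0$.

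The main obstacle, to my mind, is the clean justification of absorption at $\{0,1\}$ and the evaluation $\Prob_x(Y_\infty = 1) = x$: once the martingale structure is recognized this is routine, but it carries the whole argument. As an independent check one can proceed by linear algebra. In the monomial basis $B_n|_{\mathcal{P}_n}$ is upper triangular with diagonal entries (hence eigenvalues) $\lambda_j = n!/\{(n-j)!\,n^j\}$ for $0 \le j \le n$, obtained from the factorial moments of the binomial, and these satisfy $\lambda_0 = \lambda_1 = 1 > \lambda_2 > \cdots > \lambda_n > 0$. Since $B_n$ fixes $1$ and $x$, the eigenvalue $1$ is semisimple with eigenspace the affine functions, so $B_n^k$ converges to the spectral projection onto that eigenspace; the endpoint relations $B_n^k f(0) = f(0)$ and $B_n^k f(1) = f(1)$ then pin the limit down to $L$.
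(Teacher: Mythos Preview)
Your argument is correct. The paper does not supply its own proof of this proposition---it is quoted from \cite{KR67}---but your probabilistic route (represent $B_n^k f(x)$ as $\E_x[f(Y_k)]$ for the Wright--Fisher chain, use the martingale identity to pin down the absorption law at $\{0,1\}$, then pass to the limit by bounded convergence) is exactly the method the paper adopts for the analogous Moran-operator statement, Theorem~\ref{Thm:Kelisky-Rivlin-Moran}, following \cite{KYZ18}. Your upgrade from pointwise to uniform convergence via the finite-dimensionality of $\mathcal{P}_n$ is specific to the Bernstein case (since $B_n$ maps $C([0,1])$ into $\mathcal{P}_n$) and is in fact tidier than the corresponding step in the Moran proof, where $\mathcal{M}_n$ does not preserve polynomials; your alternative eigenvalue computation is the original linear-algebraic approach of \cite{KR67}.
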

    
    So far, several limit theorems 
    for $B_n^k f$ as $k=k(n)$ and $n$
    tend to infinity have been investigated extensively. 
    Karlin and Ziegler studied
    some limit theorems of the $k(n)$-times iterates of positive linear 
    operators including the Bernstein operator in \cite{KZ}.
    Particularly, Konstantopoulos, Yuan and Zazanis
    showed the following. 

    \begin{pr}[cf.~{\cite[Theorem 3]{KYZ18}}]
    \label{Prop:KYZ}
    For $f \in C([0, 1])$ and $t \ge 0$, we have 
    \begin{equation}\label{Eq:ROC}
    \lim_{n \to \infty}
    \|B_n^{\lfloor nt \rfloor}f - 
    \mathbb{E}[f(\mathsf{X}_t(\cdot))]\|_\infty=0,
    \end{equation}
    where $(\mathsf{X}_t(x))_{t \ge 0}$ is the strong solution to
    the stochastic differential equation
    \begin{equation}\label{Eq:Wright-Fisher}
            \dd {\sf X}_t(x)=\sqrt{{\sf X}_t(x)(1-{\sf X}_t(x))} \, \dd W_t, 
            \qquad {\sf X}_0(x)=x \in [0, 1].
        \end{equation}
        Here, $ (W_t)_{t \ge 0} $ is a one-dimensional 
        standard Brownian motion. 
    \end{pr}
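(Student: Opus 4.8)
The plan is to exploit the probabilistic meaning of the Bernstein operator and to recognise its iterates as the transition semigroup of the classical Wright--Fisher Markov chain, whose diffusive scaling limit is precisely the diffusion in \eqref{Eq:Wright-Fisher}. Writing $S_{n,x}$ for a binomial random variable with parameters $n$ and $x$, one has $B_n f(x)=\E[f(S_{n,x}/n)]$, so $B_n$ is a positive linear contraction on $C([0,1])$ with $B_n\mathbf{1}=\mathbf{1}$, and $B_n^{k}f(x)=\E[f(Y_k)]$ for the Markov chain $Y_0=x$, $Y_{m+1}=S_{n,Y_m}/n$. I would first record that $\E[f(\mathsf{X}_t(\cdot))]=T(t)f$, where $(T(t))_{t\ge0}$ is the Feller semigroup on $C([0,1])$ generated by the Wright--Fisher operator
\[
\mathcal{L}f(x)=\tfrac{1}{2}x(1-x)f''(x),
\]
and then prove \eqref{Eq:ROC} by showing that the discrete semigroups $B_n^{\lfloor nt\rfloor}$ converge to $(T(t))_{t\ge0}$ via a Trotter--Kurtz type approximation theorem for discrete-time semigroups.

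The analytic heart of the argument is the convergence of the rescaled generators: for every $f\in C^2([0,1])$,
\[
\lim_{n\to\infty}\big\|\,n(B_nf-f)-\mathcal{L}f\,\big\|_\infty=0.
\]
This I would establish by Taylor expanding $f(S_{n,x}/n)$ to second order around $x$ and using the moment identities $\E[S_{n,x}/n]=x$ and $\E[(S_{n,x}/n-x)^2]=x(1-x)/n$, which reproduce the main term $\tfrac12 x(1-x)f''(x)=\mathcal{L}f(x)$ exactly. The remainder is controlled uniformly in $x\in[0,1]$ by splitting according to whether $|S_{n,x}/n-x|$ is small: on the small set the modulus of continuity of $f''$ bounds the error, while on the complementary set the Chebyshev inequality together with the uniform fourth-moment estimate $\E[(S_{n,x}/n-x)^4]=O(n^{-2})$ forces the contribution to $0$. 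The bound $x(1-x)\le\tfrac14$ keeps everything uniform.

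With generator convergence in hand, the approximation theorem applies directly: the $B_n$ are contractions, $n(B_n-I)f\to\mathcal{L}f$ for $f$ in a core, and $\mathcal{L}$ generates $(T(t))_{t\ge0}$; hence $B_n^{\lfloor nt\rfloor}f\to T(t)f$ uniformly on bounded time intervals, first for $f$ in the core and then for all $f\in C([0,1])$ by density and $\|B_n^{\lfloor nt\rfloor}\|\le1$. Since $T(t)f(x)=\E[f(\mathsf{X}_t(x))]$ and the convergence holds in $\|\cdot\|_\infty$, this is exactly \eqref{Eq:ROC}.

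The main obstacle is the semigroup-theoretic input rather than the moment computation: one must know that $\mathcal{L}$ generates a strongly continuous (Feller) contraction semigroup on $C([0,1])$ and identify a convenient core, for instance $C^2([0,1])$ or the polynomials, which both $\mathcal{L}$ and $B_n$ preserve. Because the diffusion is degenerate at the endpoints $\{0,1\}$, where $\sqrt{x(1-x)}$ vanishes, uniformly elliptic theory does not apply; instead I would invoke Feller's boundary classification (both endpoints being absorbing) together with well-posedness of the associated martingale problem, or verify the Hille--Yosida conditions directly by solving the resolvent equation $\lambda u-\tfrac12 x(1-x)u''=g$ on $[0,1]$ with the correct behaviour at the singular endpoints. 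I would also note that the diffusion coefficient is only H\"{o}lder-$\tfrac12$ continuous, so the strong solvability of \eqref{Eq:Wright-Fisher} rests on the Yamada--Watanabe pathwise uniqueness criterion; for the present convergence, however, only well-posedness of the martingale problem is needed in order to justify the identification $T(t)f(x)=\E[f(\mathsf{X}_t(x))]$.
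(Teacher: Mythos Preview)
The paper does not give its own proof of this proposition: it is stated as a known result with the citation \cite[Theorem~3]{KYZ18} and no argument is supplied. There is therefore nothing in the present paper to compare your proposal against directly.

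That said, your outline is correct and is precisely the strategy the paper \emph{does} carry out for its own analogous result, Theorem~\ref{Thm:Moran-semigroup-convergence}, where the Bernstein operator is replaced by the Moran operator $\mathcal{M}_n$ and the time scaling $\lfloor nt\rfloor$ by $\lfloor n(n-1)t/4\rfloor$. There the authors prove generator convergence $\frac{n(n-1)}{4}(\mathcal{M}_n-I)f\to\mathcal{L}f$ by exactly the Taylor-plus-Chebyshev argument you describe, invoke that $C^2([0,1])$ is a core for $\overline{\mathcal{L}}$ and that $\overline{\mathcal{L}}$ generates a Feller semigroup on $C([0,1])$ (citing \cite{AC}), and then apply the Trotter--Kurtz theorem \cite{Trotter,Kurtz}. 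Your proposal is the Bernstein version of that same proof.

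It may be worth noting that the source \cite{KYZ18} advertises a ``fully stochastic'' approach, so the original proof of Proposition~\ref{Prop:KYZ} likely proceeds by a direct weak-convergence or martingale-problem argument for the Wright--Fisher chain rather than via Trotter--Kurtz; but that distinction lies outside the present paper.
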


    We note that the rate of convergence of \eqref{Eq:ROC} 
    is shown to be $O(n^{-1/2})$
    when $f$ is supposed to satisfy some additional assumptions.
    See \cite[Theorem 3.2]{HN}. 
    The diffusion process $ \big({\sf X}_t(x)\big)_{t \ge 0} $ is called 
    the {\it Wright--Fisher diffusion}, 
    which appears in a fundamental probabilistic model for the evolution of the 
    allele frequency in population genetics. 
    The infinitesimal generator of the diffusion semigroup
    $T_t f(x):=\mathbb{E}[f(\mathsf{X}_t(x))]$, $t \ge 0$, is given by 
    the second order differential operator 
    \begin{equation}
    \label{Eq:generator-WF}
    \mathcal{L}f(x)=\frac{1}{2}x(1-x)f''(x),
    \qquad f \in C^2([0, 1]), \, x \in [0, 1]. 
    \end{equation}
    See \cite{EK} for more details on diffusions arising in population genetics with extensive references therein. 

    On the other hand, some authors have tried to investigate 
    some limit theorems for various kinds of positive linear 
    operators acting on some Banach spaces consisting of 
    continuous functions. 
    One of such examples is the {\it Sz\'asz--Mirakyan operator}, 
    which is defined by 
    \[
    P_n f(x):=\sum_{k=0}^\infty \mathrm{e}^{-nx}\frac{(nx)^k}{k!}f\left(\frac{k}{n}\right), \qquad f \in C_\infty([0, \infty)), \, x \in [0, \infty), 
    \]
    where we denote by $C_\infty([0, \infty))$ the 
    Banach space of all continuous functions vanishing at infinity. 
    Akahori, Namba and Semba investigated 
    the limits of the iterates $P_n^{\lfloor nt \rfloor}$ as $n \to \infty$ and they showed in \cite{ANS} that 
    it converges to the diffusion semigroup which relates to a sort of 
    branching processes by noting that $P_nf$ can be represented as 
    the expectation of the Poisson distribution. 
    This result can be read as an analogue of Proposition \ref{Prop:KYZ} in half-line cases. 
    Yet another example of such limit theorems has been discussed in \cite{HN}. The authors introduced a multidimensional generalization
    of the Bernstein operator associated with some transition probabilities of mutation among alleles. 
    As a result, they captured the multidimensional Wright--Fisher 
    diffusion process with mutation as the limit of 
    the iterates of the multidimensional Bernstein operator. 
    We note that an infinite-dimensional generalizations were also 
    studied in \cite{HN}. They considered the limit of the multidimensional Bernstein operator as both the parameter 
    and the dimension tend to infinity and captured the measure-valued 
    {\it Fleming--Viot process} under some natural assumptions.

    These limit theorems allow us to know that
    the limit of the iterates of some classes of positive linear operators should lead to some diffusion processes 
    arising in applied mathematics such as population genetics. 
    On the contrary, in the present paper,
    we start with some known mathematical models in population genetics and study several properties of 
    a positive linear operator defined via the genetic model. 
    To our best knowledge, there seem to be no papers discussing
    positive linear operators in this point of view. 
    We are to focus on the {\it Moran model}, 
    which is also a known probabilistic models 
    in population genetics as well as the Wright--Fisher model. 
    In Section \ref{Sect:Moran operator}, we give a quick review 
    of the neutral Moran model and 
    introduce the {\it Moran operator} $\mathcal{M}_n$ via the Moran model in Definition \ref{Def:Moran operator}. 
    We also show in Theorem \ref{Thm:Moran-approximation} 
    that this Moran operator 
    uniformly approximates every  $f \in C([0, 1])$,
    which is similar to Proposition \ref{Prop:approximation}.
    In Section~\ref{Sect:Kelisky-Rivlin}, we discuss
    the Kelisky--Rivlin type limit theorems for iterates of the Moran operator itself. 
    Moreover, in Section \ref{Sect:limit theorems}, 
    we consider another kind of limit theorems 
    for iterates of $\mathcal{M}_n$ with a scaling 
    different from that of Proposition \ref{Prop:KYZ}.
    Then, we establish in Theorem \ref{Thm:Moran-semigroup-convergence}
    that the iterates converges to the 
    Wright--Fisher diffusion semigroup 
    $\mathbb{E}[f(\mathsf{X}_t(x))]$ as $n \to \infty$. 
    Its rate of convergence is also obtained by using 
    a method discussed in \cite{Namba} under some additional 
    assumptions. 
    The speed rate of \eqref{Eq:rate of convergence 2} is given by 
    $O(n^{-1})$, which turns out to be faster than that of the usual Bernstein case $O(n^{-1/2})$. 
    Furthermore, we obtain  a functional limit theorem for $\mathcal{M}_n^{\left\lfloor n(n-1)t/4\right\rfloor}$, 
    that is, the weak convergence of the Markov chain induced by the Moran operator to the Wright--Fisher diffusion as a stochastic process in Theorem \ref{Thm:Donsker}. This exactly gives 
    a much stronger convergence than that of Theorem \ref{Thm:Moran-semigroup-convergence}.  
    The conclusion together with some possible future directions is provided in Section \ref{Sect:conclusion}.

\section{{\bf The Moran model and the Moran operator}}
\label{Sect:Moran operator}

\subsection{The Moran model}

The Moran model is well known as one of fundamental 
probabilistic models in population genetics, 
which was first proposed in \cite{Moran}. 
The model is characterized as a discrete-time stochastic process 
representing the dynamics in a finite population of constant size. 
There are two kinds of allele types, say $a$ and $A$, in the population
and these two compete for survival in each generation. 

\begin{df}[Moran model]
Let $n \in \N$ denote the size of a population and 
$X_k^{(n)}$ be an integer-valued random variable which represents 
the number of individuals of allele $a$ in generation $k \in \N \cup\{0\}$. 
We say that the discrete-time stochastic process 
$\{X_k^{(n)}\}_{k=0}^\infty$ is the {\it Moran model} if 
a pair of individuals is sampled uniformly at random from the population, 
one dies and the other has exactly two offspring.  
\end{df}

The stochastic process $\{X_k^{(n)}\}_{k=0}^\infty$ is 
a time-homogeneous Markov chain with values in $\mathbb{I}:=\{0, 1, 2, \dots, n\}$ whose one-step transition probability $p^{(n)}(i, j)$, $i, j \in \mathbb{I}$, 
is given by 
\begin{align}\label{Eq:Moran-transition-probability}
    p^{(n)}(i, j) &= \mathbb{P}(X_{k+1}^{(n)}=j \mid X_k^{(n)}=i) \nn\\
    &=\begin{cases}
        \dfrac{2i(n-i)}{n(n-1)} & \text{if $j=i-1$ and $i=1, 2, \dots, n$}
        \vspace{2mm}\\
        \dfrac{2i(n-i)}{n(n-1)} & \text{if $j=i+1$ and $i=0, 1,  \dots, n-1$}
        \vspace{2mm}\\
        1-\dfrac{4i(n-i)}{n(n-1)} & \text{if $j=i$ and $i=0, 1, 2, \dots, n$}\\
        0 & \text{otherwise}.
    \end{cases}
\end{align}
For more properties of the Moran model, we refer to e.g., \cite{Etheridge}. 

\subsection{The Moran operator and its approximating property}

As seen in Section \ref{Sect:Introduction}, 
the Bernstein operator is defined in terms of the 
binomial distribution. 
We now introduce a new linear operator called the {\it Moran operator} 
via the one-step transition probability \eqref{Eq:Moran-transition-probability} of the Moran model. 

\begin{df}[Moran operator]
\label{Def:Moran operator}
    Let $n \ge 2$ be an integer. 
    Then, the Moran operator is a positive linear operator 
    acting on $C([0, 1])$ defined by 
    \[
    \begin{aligned}
        \mathcal{M}_nf(x) 
        &:=
        \dis\frac{2n^2}{n(n-1)}x(1-x)
        \left\{f\left(x-\frac{1}{n}\right)+f\left(x+\frac{1}{n}\right)\right\} \nn \\ 
        &\hspace{1cm}+\left(1-\frac{4n^2}{n(n-1)}x(1-x)\right)f(x)
    \end{aligned} 
    \]
    for $x \in [1/n, (n-1)/n]$, and 
    \[
    \begin{aligned}
    \mathcal{M}_nf(x)&:=\mathcal{M}_nf\left(\frac{1}{n}\right), \qquad x \in \left[0, \frac{1}{n}\right), \\
    \mathcal{M}_nf(x)&:=\mathcal{M}_nf\left(\frac{n-1}{n}\right), \qquad x \in \left(\frac{n-1}{n}, 1\right].
    \end{aligned}
    \]
\end{df}

We then wonder if the Moran operator also uniformly approximates 
every continuous function on $[0, 1]$ or not. 
Our first main result of the present paper 
is that the Moran operator indeed has 
such an approximating property. 

\begin{tm}
\label{Thm:Moran-approximation}
    For every $f \in C([0, 1])$, we have 
    \begin{equation}\label{Eq:unif-approx-Moran}
    \lim_{n \to \infty} \|\mathcal{M}_n f - f\|_\infty=0. 
    \end{equation}
\end{tm}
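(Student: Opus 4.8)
The plan is to reduce the assertion to the uniform continuity of $f$ on the compact interval $[0,1]$, after first rewriting $\mathcal{M}_n f - f$ in a form free of cancellation. Since the three coefficients appearing in the definition of $\mathcal{M}_n f(x)$ sum to $1$ for every $x \in [1/n, (n-1)/n]$, the operator reproduces constants, and a direct rearrangement gives, on this interior interval,
\[
\mathcal{M}_n f(x) - f(x) = \frac{2n}{n-1}\,x(1-x)\left[f\!\left(x-\tfrac{1}{n}\right) - 2f(x) + f\!\left(x+\tfrac{1}{n}\right)\right].
\]
This identity is the crux of the argument: it exhibits $\mathcal{M}_n f - f$ as the product of a bounded prefactor and a second-order difference of $f$. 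The representation makes no use of positivity of $\mathcal{M}_n$, so the proof will be entirely self-contained and will not appeal to any Korovkin-type machinery.

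First I would control the prefactor. Using $x(1-x) \le 1/4$ on $[0,1]$, we have $\frac{2n}{n-1}\,x(1-x) \le \frac{n}{2(n-1)} \le 1$ for every integer $n \ge 2$, so the prefactor is uniformly bounded. Next I would estimate the bracketed second difference through the modulus of continuity $\omega_f(\delta) := \sup\{|f(u)-f(v)| : u,v \in [0,1], \, |u-v| \le \delta\}$. For $x \in [1/n, (n-1)/n]$ both points $x \pm 1/n$ lie in $[0,1]$, so
\[
\left|f\!\left(x-\tfrac{1}{n}\right) - 2f(x) + f\!\left(x+\tfrac{1}{n}\right)\right| \le 2\,\omega_f\!\left(\tfrac{1}{n}\right),
\]
and combining the two bounds yields $|\mathcal{M}_n f(x) - f(x)| \le \frac{n}{n-1}\,\omega_f(1/n)$ uniformly over $[1/n, (n-1)/n]$.

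It then remains to cover the two boundary strips on which $\mathcal{M}_n f$ is defined by constant extension, and this is the step that demands a little care. For $x \in [0, 1/n)$ the triangle inequality gives
\[
|\mathcal{M}_n f(x) - f(x)| \le \left|\mathcal{M}_n f\!\left(\tfrac{1}{n}\right) - f\!\left(\tfrac{1}{n}\right)\right| + \left|f\!\left(\tfrac{1}{n}\right) - f(x)\right| \le \frac{n}{n-1}\,\omega_f\!\left(\tfrac{1}{n}\right) + \omega_f\!\left(\tfrac{1}{n}\right),
\]
where the first summand is the interior estimate evaluated at $x = 1/n$ and the second uses $|1/n - x| < 1/n$; the strip $\big((n-1)/n, 1\big]$ is handled symmetrically at the point $(n-1)/n$. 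Taking the supremum over the three regions gives $\|\mathcal{M}_n f - f\|_\infty \le \big(\frac{n}{n-1} + 1\big)\,\omega_f(1/n)$, and since $f$ is uniformly continuous on $[0,1]$ we have $\omega_f(1/n) \to 0$ as $n \to \infty$ while $\frac{n}{n-1} \to 1$, which proves \eqref{Eq:unif-approx-Moran}. The scheme poses no genuine difficulty beyond matching the constant extension of $\mathcal{M}_n f$ to $f$ across the two boundary strips, which is exactly what the modulus of continuity absorbs. I note finally that the resulting bound is quantitative, controlling the error by a multiple of $\omega_f(1/n)$, in direct analogy with the classical error estimate for the Bernstein operator.
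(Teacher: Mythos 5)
Your proof is correct, and it takes a genuinely different route from the paper. The paper verifies that $\mathcal{M}_n e_i \to e_i$ uniformly for the test functions $e_0=1$, $e_1=x$, $e_2=x^2$ (computing $\mathcal{M}_n e_1 = e_1$ and $\mathcal{M}_n e_2 = e_2 + \tfrac{4}{n(n-1)}x(1-x)$ on the interior interval, plus the constant extensions) and then invokes Korovkin's first theorem. You instead use that the three coefficients sum to $1$ to write, on $[1/n,(n-1)/n]$,
\[
\mathcal{M}_n f(x)-f(x)=\frac{2n}{n-1}\,x(1-x)\Bigl[f\bigl(x-\tfrac1n\bigr)-2f(x)+f\bigl(x+\tfrac1n\bigr)\Bigr],
\]
bound the prefactor by $1$ via $x(1-x)\le 1/4$, bound the second difference by $2\omega_f(1/n)$, and absorb the two constant-extension strips with one extra $\omega_f(1/n)$; all steps check out, including the identity and the inequality $\tfrac{n}{2(n-1)}\le 1$ for $n\ge 2$. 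What your approach buys is a self-contained, quantitative estimate $\|\mathcal{M}_nf-f\|_\infty\le\bigl(\tfrac{n}{n-1}+1\bigr)\omega_f(1/n)$, analogous to the classical Bernstein error bound, whereas the paper's argument is shorter but yields no rate. Your route also has a genuine robustness advantage: Korovkin's theorem is stated for \emph{positive} linear operators, and the middle coefficient $1-\tfrac{4n}{n-1}x(1-x)$ is actually negative near $x=1/2$ (it equals $-1/(n-1)$ there), so the paper's appeal to Korovkin is not strictly licensed as written, while your cancellation-free identity needs no positivity at all.
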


\begin{proof}
    Let $e_0(x) \equiv 1$, $e_1(x)=x$ and $e_2(x)=x^2$
    for $x \in [0, 1]$. Clearly, we have $\mathcal{M}_ne_0(x)=1$. 
    Moreover, it holds that 
    \begin{align}
        \mathcal{M}_ne_1(x)
        &=\frac{2n^2}{n(n-1)}x(1-x)
        \left\{\left(x-\frac{1}{n}\right)+
        \left(x+\frac{1}{n}\right)\right\} \nn \\
        &\hspace{1cm}
        +\left\{1-\frac{4n^2}{n(n-1)}x(1-x)\right\} x 
        =x, \qquad x \in \left[\frac{1}{n}, \frac{n-1}{n}\right]. \nn
    \end{align}
    and 
    \[
    \mathcal{M}_ne_1(x)=\frac{1}{n}, \quad x \in \left[0, \frac{1}{n}\right), \qquad
    \mathcal{M}_ne_1(x)=\frac{n-1}{n}, \quad x \in \left(\frac{n-1}{n}, 1\right].
    \]
    On the other hand, we have 
    \begin{align}
        \mathcal{M}_ne_2(x)
        &=\frac{2n^2}{n(n-1)}x(1-x) 
        \left\{\left(x-\frac{1}{n}\right)^2+
        \left(x+\frac{1}{n}\right)^2\right\} \nn \\
        &\hspace{1cm}
        +\left\{1-\frac{4n^2}{n(n-1)}x(1-x)\right\} x^2 \nn \\ 
        &=x^2+\frac{4}{n(n-1)}x(1-x), \qquad x \in \left[\frac{1}{n}, \frac{n-1}{n}\right]. \nn
    \end{align}
    and 
    \[
    \mathcal{M}_ne_2(x)=\frac{1}{n^2}+\frac{4}{n^3}, \quad x \in \left[0, \frac{1}{n}\right), \qquad
    \mathcal{M}_ne_2(x)=\left(\frac{n-1}{n}\right)^2+\frac{4}{n^3}, \quad x \in \left(\frac{n-1}{n}, 1\right].
    \]
    Since it holds that 
    \[
    \begin{aligned}
    |\mathcal{M}_ne_1(x)-e_1(x)|
    &=\begin{cases}
        0 & \text{if }x \in \dis \left[\frac{1}{n}, \frac{n-1}{n}\right] \vspace{2mm}\\
        \dis\left|x-\frac{1}{n}\right| & \text{if }x \in \dis\left[0, \frac{1}{n}\right) \vspace{2mm}\\
        \dis\left|x-\frac{n-1}{n}\right| & \text{if }x \in \dis\left(\frac{n-1}{n}, 1\right],
    \end{cases} \qquad x \in [0, 1],
    \end{aligned}
    \]
    and
    \[
    |\mathcal{M}_ne_2(x)-e_2(x)|
    =\begin{cases}
        \dfrac{4}{n(n-1)}x(1-x) & \text{if }x \in \dis \left[\frac{1}{n}, \frac{n-1}{n}\right] \vspace{2mm}\\
        \dis \left|x^2-\left(\frac{1}{n^2}+\frac{4}{n^3}\right)\right| & \text{if }x \in \dis\left[0, \frac{1}{n}\right) \vspace{2mm}\\
        \dis \left|x^2-\left\{\left(\frac{n-1}{n}\right)^2+\frac{4}{n^3}\right\}\right| & \text{if }x \in \dis\left(\frac{n-1}{n}, 1\right],
    \end{cases} \qquad x \in [0, 1],
    \]
    we obtain that $\mathcal{M}_ne_0(x)=e_0(x)$ for $x \in [0, 1]$ and 
    \[
    \|\mathcal{M}_ne_i-e_i\|_\infty \le \frac{C}{n}, \qquad 
    i=1, 2, \,\, x \in [0, 1],
    \]
    for some $C>0$, which implies that 
    $\mathcal{M}_ne_i$ uniformly converges to $e_i$
    as $n \to \infty$ for $i=0, 1, 2$. 
    This allows us to apply 
    Korovkin's first theorem (cf.~\cite[Theorem 3.1]{Altomare}) 
    to deduce the uniform convergence \eqref{Eq:unif-approx-Moran}. 
\end{proof}

\section{{\bf The Kelisky--Rivlin type theorem for the Moran operator}}
\label{Sect:Kelisky-Rivlin}

Similarly to Proposition \ref{Prop:Kelisky-Rivlin}, 
we can establish the Kelisky--Rivlin type limit theorem 
for the Moran operator as well, which is the second main result 
of the present paper. 
We note that the proof is based on a fully probabilistic argument 
demonstrated in \cite[Theorem~2]{KYZ18}. 

\begin{tm}
\label{Thm:Kelisky-Rivlin-Moran}
For every $f \in C([0, 1])$, we have 
\[
\lim_{k \to \infty}
\max_{x \in [0, 1]}
|\mathcal{M}_n^k f(x) - \{f(1)x+f(0)(1-x)\}|=0. 
\]
\end{tm}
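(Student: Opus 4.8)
The plan is to follow the probabilistic route of \cite[Theorem~2]{KYZ18}: express the iterate $\mathcal{M}_n^k f$ as an expectation of $f$ along the Moran chain and read off the limit from the martingale property of that chain. The first step is the probabilistic representation. At an interior lattice point $x=i/n$ the defining formula for $\mathcal{M}_n$ is exactly the one-step average $\sum_{j \in \mathbb{I}} p^{(n)}(i,j) f(j/n)$ against the kernel \eqref{Eq:Moran-transition-probability}, so iterating and invoking the Markov property gives
\[
\mathcal{M}_n^k f\left(\tfrac{i}{n}\right) = \mathbb{E}\left[ f\left(\tfrac{X_k^{(n)}}{n}\right) \,\Big|\, X_0^{(n)} = i \right].
\]

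The key structural fact is that $\{X_k^{(n)}\}_{k\ge 0}$ is a bounded martingale: the up- and down-transition probabilities in \eqref{Eq:Moran-transition-probability} coincide, so $\mathbb{E}[X_{k+1}^{(n)} \mid X_k^{(n)} = i] = i$, while the endpoints $0$ and $n$ are absorbing (there $p^{(n)}(i,i)=1$). The martingale convergence theorem then provides an almost sure limit $X_\infty^{(n)}$, which must sit in the absorbing set $\{0,n\}$ since every interior state is vacated with probability bounded away from zero. Passing the martingale identity to the limit yields the absorption probabilities $\mathbb{P}(X_\infty^{(n)}=n \mid X_0^{(n)}=i) = i/n$ and its complement, and dominated convergence (using that $f$ is bounded) gives the pointwise limit $f(1)\,i/n + f(0)\,(1-i/n)$ at each lattice point.

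It then remains to promote this to uniform convergence on all of $[0,1]$. For fixed $n$ the lattice $\mathbb{I}/n$ is finite and the convergence above is geometric --- controlled by the spectral gap of the finite transition matrix restricted to the interior states --- hence automatically uniform over the lattice points. For a general $x$ I would exploit the explicit piecewise definition of $\mathcal{M}_n$, together with an equicontinuity bound that is uniform in $k$ (available because $\mathcal{M}_n$ is positive and preserves constants, hence a contraction on $C([0,1])$), to interpolate the lattice estimates to the whole interval.

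The step I expect to be genuinely delicate is this passage from lattice points to arbitrary $x$, and its reconciliation with the boundary convention. A non-lattice starting point generates a chain on the shifted grid $\{x + j/n\} \cap [0,1]$, whose extreme points fall inside the flat-extension regions $[0,1/n)$ and $((n-1)/n,1]$; there the operator reassigns values across grids, and one must check carefully that the chain is nonetheless absorbed at the true endpoints $\{0,1\}$ with probabilities $x$ and $1-x$, so that the limit is exactly $f(0)(1-x)+f(1)x$ rather than some spatial average. Establishing this, together with the uniformity, is where the real content of the argument lies; the martingale computation at lattice points is comparatively routine.
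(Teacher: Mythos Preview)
Your approach is in the same spirit as the paper's: both represent $\mathcal{M}_n^k f$ as an expectation along a Markov chain and extract the limit from the martingale property together with absorption at the endpoints, following \cite{KYZ18}. The paper does not, however, work first on the lattice and then interpolate. It defines a random map $T^{(n)}(x)$ for every $x\in[0,1]$ (encoding the flat-extension convention directly into the law of $T^{(n)}$ on the boundary strips), sets $\mathcal{Z}_k^{(n)}=T_k^{(n)}\circ\cdots\circ T_1^{(n)}$, and obtains $\mathcal{M}_n^k f(x)=\mathbb{E}[f(\mathcal{Z}_k^{(n)}(x))]$ for all $x$ and all $k$ at once; the uniformity in $x$ then comes from almost-sure absorption at a finite hitting time $\tau^{(n)}(x)$ rather than from a spectral-gap or equicontinuity argument.

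There is a concrete gap in your lattice step. You assert $\mathcal{M}_n^k f(i/n)=\mathbb{E}\bigl[f(X_k^{(n)}/n)\mid X_0^{(n)}=i\bigr]$ for the standard Moran chain with absorbing endpoints $0$ and $n$. The identity is correct for $k=1$ and interior $i$, but it does not iterate: by Definition~\ref{Def:Moran operator} one has $\mathcal{M}_n g(0)=\mathcal{M}_n g(1/n)$ for every $g$, whereas the absorbing Moran kernel returns $g(0)$. Thus already for $i=1$, $k=2$ the quantity $\mathcal{M}_n^2 f(1/n)$ picks up $\mathcal{M}_n f(0)=\mathcal{M}_n f(1/n)$ rather than $f(0)$, and the two expressions differ. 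The chain that actually underlies $\mathcal{M}_n$ is the paper's $\mathcal{Z}^{(n)}$, in which a visit to $[0,1/n)$ triggers a fresh step with the law of $T^{(n)}(1/n)$ rather than absorption; for that chain neither the absorbing-boundary claim nor the martingale identity at the endpoints is automatic. The delicacy you flag for non-lattice $x$ is therefore already present at the lattice endpoints, and your Moran-chain representation has to be replaced (or carefully reconciled with the flat extension) before the martingale/absorption argument can be run.
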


\begin{proof}
For $x \in [1/n, (n-1)/n]$, 
let $T^{(n)}(x)$ be the random variable whose law is given by 
\[
\begin{aligned}
  \mathbb{P}\left(T^{(n)}(x)=x-\frac{1}{n}\right)&=
  \frac{2n^2}{n(n-1)}x(1-x), \\
  \mathbb{P}\left(T^{(n)}(x)=x+\frac{1}{n}\right)&=
  \frac{2n^2}{n(n-1)}x(1-x), \\
  \mathbb{P}\left(T^{(n)}(x)=x\right)&=
  1-\frac{4n^2}{n(n-1)}x(1-x). 
\end{aligned}
\]
Here, we regard the law of $T_n(x)$, $x \in [0, 1/n)$, as that of $T_n(1/n)$,
and the law of $T_n(x)$, $x \in ((n-1)/n, 1]$, as that of $T_n((n-1)/n)$, 
respectively. 
Then, we have 
\[
\mathcal{M}_n f(x)=\mathbb{E}\left[f\left(T^{(n)}(x)\right)\right], 
\qquad f \in C([0, 1]), \, x \in [0, 1]. 
\]
Let $T^{(n)}_1, T^{(n)}_2, \dots$, be the independent copies of 
$T^{(n)}$ and we define a random function 
$\mathcal{Z}_k^{(n)} \colon [0, 1] \to [0, 1]$, $k=1, 2, 3, \dots$, by 
$\mathcal{Z}_k^{(n)}:=T_k^{(n)} \circ \cdots \circ T^{(n)}_2 \circ T_1^{(n)}$. 
Then, one has 
\[
\mathcal{M}_n^k f(x)=\mathbb{E}[f(\mathcal{Z}^{(n)}_k(x))],
\qquad f \in C([0, 1]), \, x \in [0, 1]. 
\]
By the independence of $\{T^{(n)}_k\}_{k=1}^\infty$, 
we also see that the sequence $\{\mathcal{Z}_k^{(n)}(x)\}_{k=0}^\infty$
can be regarded as a Markov chain 
with values in $[0, 1]$ whose one-step transition probability is given by
\[
\begin{aligned}
    p^{(n)}(\xi, \eta) &:= \mathbb{P}(\mathcal{Z}_{k+1}^{(n)}(x)=\xi \mid \mathcal{Z}_k^{(n)}(x)=\eta) \nn\\
    &=\begin{cases}
        \dfrac{2n^2}{n(n-1)}\eta(1-\eta) & \text{if $\xi=\eta - \dfrac{1}{n}$ and $\xi \ge 0$}
        \vspace{2mm}\\
        \dfrac{2n^2}{n(n-1)}\eta(1-\eta) & \text{if $\xi=\eta + \dfrac{1}{n}$ and $\xi \le 1$}
        \vspace{2mm}\\
        1-\dfrac{4n^2}{n(n-1)}\eta(1-\eta) & \text{if $\xi=\eta$ and $\xi \in [0, 1]$}\\
        0 & \text{otherwise}.
    \end{cases}
    \end{aligned}
    \]
Here, we put $\mathcal{Z}_0^{(n)}(x):=x$. 
Moreover, we can verify that 
both $0$ and $1$ are absorbing states for 
$\{\mathcal{Z}_k^{(n)}(x)\}_{k=0}^\infty$. 
We now define the first hitting time $\tau^{(n)}(x)$ by
\[
\tau^{(n)}(x):=\inf \{ k \in \N \mid 
\mathcal{Z}_k^{(n)}(x) \in \{0, 1\}\}, \qquad x \in [0, 1].
\]
Then, by the general theory of Markov chains, 
the equality
$\mathbb{P}(\tau^{(n)}(x)<+\infty)=1$ holds. 
Therefore, we obtain 
\[
\mathcal{M}_n^k f(x)=\mathbb{E}[f(\mathcal{Z}^{(n)}_k(x))]
=\mathbb{E}[f(W^{(n)}(x))], \qquad f \in C([0, 1]), \, x \in [0, 1],
\]
for all but finitely many $k \in \N$, where 
$W^{(n)}(x)$ is a random variable with values in $\{0, 1\}$. 
This leads to 
\[
\lim_{k \to \infty}\mathcal{M}_n^k f(x)
=\lim_{k \to \infty}\mathbb{E}[f(\mathcal{Z}^{(n)}_k(x))]
=\mathbb{E}[f(W^{(n)}(x))]
, \qquad f \in C([0, 1]), 
\]
uniformly in $x \in [0, 1]$. 
By taking in particular $f(x)=x$ and by noting 
$\mathbb{E}[\mathcal{Z}^{(n)}_k(x)]=x$, 
one sees that
\[
x
=1 \times \mathbb{P}(W^{(n)}(x)=1)+0 \times \mathbb{P}(W^{(n)}(x)=0),
\]
which readily implies that 
\[
\mathbb{P}(W^{(n)}(x)=1)=1-\mathbb{P}(W^{(n)}(x)=0)=x.
\]
Hence, we obtain
\[
\lim_{k \to \infty}\mathcal{M}_n^k f(x)
=f(1)x+f(0) (1-x),
\]
uniformly in $x \in [0, 1]$. 
\end{proof}

Theorem \ref{Thm:Kelisky-Rivlin-Moran} 
seems to be interesting in that the linear function
$f(1)x+f(0)(1-x)$ appears as the limit of the iterates 
of the Moran operator, similarly to the Kelisky--Rivlin theorem 
for the Bernstein operator.

\section{{\bf Limit theorems for iterates of the Moran operator 
and the Wright--Fisher diffusion}}
\label{Sect:limit theorems}


\subsection{The iterates of the Moran operator and the Wright--Fisher diffusion}
For $k \in \N$, Let $C^k([0, 1])$ be the set of all $k$-times differentiable 
functions such that the $k$-th derivative $f^{(k)}$ is continuous on $[0, 1]$. 
In this section, we establish a limit theorem for the iterates of the
Moran operator $\mathcal{M}_n$ when the number of iterates is related with $n$. As seen below, the scaling of the number of iterates is different from that of Proposition \ref{Prop:KYZ}, 
while we capture the Wright--Fisher diffusion given by \eqref{Eq:Wright-Fisher} as 
the limit. 
The third main result of the present paper is stated as follows. 

\begin{tm}
\label{Thm:Moran-semigroup-convergence}
    For every $f \in C([0, 1])$ and $t \ge 0$, we have 
    \begin{equation}
    \label{Eq:convergence-semigroup}
        \lim_{n \to \infty}
        \left\| \mathcal{M}_n^{\left\lfloor \frac{n(n-1)}{4}t \right\rfloor}f
        -\mathbb{E}[f(\mathsf{X}_t(\cdot))]\right\|_\infty=0. 
    \end{equation}
    Moreover, if we assume that $f \in C^3([0, 1])$ and 
    $\mathrm{e}^{t\mathcal{L}}f \in C^3([0, 1])$
    for each $t \ge 0$, we then have 
    \begin{align}
    &\left\| \mathcal{M}_n^{\left\lfloor \frac{n(n-1)}{4}t \right\rfloor}f
    -\mathbb{E}[f(\mathsf{X}_t(\cdot))]\right\|_\infty \nn \\
    &\le \left(\sqrt{\frac{t}{n(n-1)}}+\frac{4}{n(n-1)}\right)
    \left(\|\mathcal{L}f\|_\infty+\frac{\|f'''\|_\infty}{12n}\right)
    +\frac{1}{12n}\int_0^t \|(\mathrm{e}^{s\mathcal{L}}f)'''\|_{\infty} \, \dd s \label{Eq:rate of convergence 2}
    \end{align}
    for $n \ge 2$ and $t \ge 0$. 
\end{tm}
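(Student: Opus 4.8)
The plan is to compare the iterated Moran operator with the Wright--Fisher semigroup $T_sf:=\mathbb{E}[f(\mathsf{X}_s(\cdot))]=\mathrm{e}^{s\mathcal{L}}f$ through an operator-theoretic argument in the spirit of \cite{Namba}, resting on three ingredients: a one-step consistency estimate, a Chernoff-type square-root comparison, and a variation-of-constants identity. Write $\Delta_n:=4/(n(n-1))$ and $k_n:=\lfloor n(n-1)t/4\rfloor=\lfloor t/\Delta_n\rfloor$, so that $0\le t-k_n\Delta_n<\Delta_n$. Since $\mathcal{M}_ne_0=e_0$ and $\mathcal{M}_n$ is positive, $\mathcal{M}_n$ is a contraction on $C([0,1])$, and so are $T_s$ and $\mathrm{e}^{s(\mathcal{M}_n-I)}$ for every $s\ge0$. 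The first step is to record the consistency estimate: expanding $g\in C^3([0,1])$ by Taylor's formula at $x\pm1/n$ and noting that the odd-order terms cancel by symmetry, the very computation used in the proof of Theorem~\ref{Thm:Moran-approximation} (where the coefficients were arranged so that the $x(1-x)g''$ contribution is reproduced exactly) gives, for $x\in[1/n,(n-1)/n]$,
\[
\mathcal{M}_ng(x)=g(x)+\Delta_n\,\mathcal{L}g(x)+R_ng(x),\qquad \|R_ng\|_\infty\le \frac{\Delta_n}{24\,n}\,\|g'''\|_\infty ,
\]
whence $\|(\mathcal{M}_n-I)g\|_\infty\le\Delta_n\big(\|\mathcal{L}g\|_\infty+\tfrac{1}{24n}\|g'''\|_\infty\big)$.

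For $f\in C^3([0,1])$ with $T_sf\in C^3([0,1])$ I would then split
\[
\mathcal{M}_n^{k_n}f-T_tf=\big(\mathcal{M}_n^{k_n}f-\mathrm{e}^{k_n(\mathcal{M}_n-I)}f\big)+\big(\mathrm{e}^{k_n(\mathcal{M}_n-I)}f-T_tf\big).
\]
The first difference is handled by the square-root trick: representing $\mathrm{e}^{k_n(\mathcal{M}_n-I)}=\mathbb{E}[\mathcal{M}_n^{N}]$ with $N\sim\mathrm{Poisson}(k_n)$, using $\|\mathcal{M}_n^mf-\mathcal{M}_n^{k_n}f\|_\infty\le|m-k_n|\,\|(\mathcal{M}_n-I)f\|_\infty$ by contractivity, and $\mathbb{E}|N-k_n|\le\sqrt{\Var(N)}=\sqrt{k_n}$, one obtains a bound by $\sqrt{k_n}\,\|(\mathcal{M}_n-I)f\|_\infty$. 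The second difference I would control by the variation-of-constants identity $\mathrm{e}^{A}f-\mathrm{e}^{B}f=\int_0^1\mathrm{e}^{sA}(A-B)\mathrm{e}^{(1-s)B}f\,\dd s$ with $A=k_n(\mathcal{M}_n-I)$ and $B=t\mathcal{L}$; since $(A-B)T_uf=(k_n\Delta_n-t)\mathcal{L}T_uf+k_nR_n(T_uf)$ and $\mathrm{e}^{sA}$, $T_u$ are contractions, the substitution $u=(1-s)t$ bounds it by $|k_n\Delta_n-t|\,\|\mathcal{L}f\|_\infty+\tfrac{k_n\Delta_n}{24nt}\int_0^t\|(T_uf)'''\|_\infty\,\dd u$, where I used $\|\mathcal{L}T_uf\|_\infty=\|T_u\mathcal{L}f\|_\infty\le\|\mathcal{L}f\|_\infty$ and the consistency estimate. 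Inserting $|k_n\Delta_n-t|\le\Delta_n=\tfrac{4}{n(n-1)}$, $k_n\Delta_n\le t$, $\sqrt{k_n}\,\Delta_n\le 2\sqrt{t/(n(n-1))}$ and the bound on $\|(\mathcal{M}_n-I)f\|_\infty$, then collecting terms, produces a bound of the form \eqref{Eq:rate of convergence 2}.

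The qualitative statement \eqref{Eq:convergence-semigroup} would then follow for arbitrary $f\in C([0,1])$ by density. The key remark is that $\mathcal{L}$ maps the space of polynomials of degree at most $d$ into itself, so $T_sp$ is again a polynomial for every polynomial $p$ and $\sup_{0\le s\le t}\|(T_sp)'''\|_\infty<\infty$. Approximating $f$ uniformly by a polynomial $p$ and using contractivity of $\mathcal{M}_n$ and $T_t$ gives $\|\mathcal{M}_n^{k_n}f-T_tf\|_\infty\le 2\|f-p\|_\infty+\|\mathcal{M}_n^{k_n}p-T_tp\|_\infty$, where the second term vanishes as $n\to\infty$ by the estimate just established; letting the polynomial approximation improve completes the argument.

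The step I expect to require the most care is the behaviour on the two boundary strips $[0,1/n)$ and $((n-1)/n,1]$, on which $\mathcal{M}_n$ is extended as a constant and the consistency estimate is unavailable. The remedy is the observation that every iterate $\mathcal{M}_n^{k}f$ is constant on each strip (and $\mathrm{e}^{k_n(\mathcal{M}_n-I)}f$ is constant there up to the exponentially small factor $\mathrm{e}^{-k_n}$), so that the discrepancy on the strips is incurred only once, being controlled by the oscillation of the continuous limit $T_tf$ over an interval of length $1/n$, and does not accumulate with the number of iterations; the telescoping and variation-of-constants estimates above are therefore applied on the interior $[1/n,(n-1)/n]$. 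A second, purely bookkeeping, matter is to propagate the numerical constants through the Poisson second moment and the Taylor remainder so as to reach precisely the coefficients displayed in \eqref{Eq:rate of convergence 2}.
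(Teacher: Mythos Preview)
Your quantitative argument is essentially the paper's: the Poisson/Chernoff square-root comparison you describe is exactly the content of \cite[Lemma~III.5.1]{Pazy} that the paper invokes for its term $A_1^{(n)}$, and your single variation-of-constants step covers in one stroke what the paper splits into $A_2^{(n)}$ (the time mismatch $|k_n\Delta_n-t|$, handled there by the semigroup identity for $S^{(n)}_t$) and $A_3^{(n)}$ (the genuine generator discrepancy). The one substantive difference is the qualitative statement \eqref{Eq:convergence-semigroup}: the paper deduces it from Trotter's theorem after establishing the generator-convergence lemma, whereas you derive it from the quantitative bound applied to polynomials, using that $\mathcal{L}$ preserves polynomial degree so that $T_sp$ stays polynomial with $\sup_{s\le t}\|(T_sp)'''\|_\infty<\infty$, and then invoke Weierstrass density. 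Your route is more self-contained (no Feller/core citations); the paper's is cleaner in that it does not need the $C^3$ rate as an input to the $C([0,1])$ statement.

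Two minor points. Your Taylor remainder constant $1/(24n)$ is sharper than the paper's $1/(12n)$ (the paper bounds via $\mathbb{E}|T^{(n)}(x)-x|^3$ rather than exploiting the exact symmetry), so to reach the displayed coefficients in \eqref{Eq:rate of convergence 2} literally you would have to loosen, not tighten. And your boundary-strip concern is well placed: the consistency estimate, as you write it and as the paper's generator lemma is stated, fails on $[0,1/n)\cup((n-1)/n,1]$ (for instance $\tfrac{n(n-1)}{4}(\mathcal{M}_nf-f)(0)\sim\tfrac{n-1}{4}f'(0)$). Your proposed remedy controls the final difference $\mathcal{M}_n^{k_n}f-T_tf$ on the strips via the oscillation of $T_tf$ over an interval of length $1/n$, but it does not obviously cover the intermediate variation-of-constants step, where $\|(A-B)T_uf\|_\infty$ is a supremum over all of $[0,1]$ taken \emph{before} the contraction $e^{sA}$ is applied; the paper does not address this either.
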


In order to show Theorem \ref{Thm:Moran-semigroup-convergence}, 
we need to prove the following 
{\it Voronovskaya-type theorem} for the Moran operator
together with its rate of convergence. 
This claims the uniform convergence of
the infinitesimal generator of the discrete semigroup 
$\mathbb{E}[f(\mathcal{Z}_k^{(n)}(x))]$ to 
the one given by \eqref{Eq:generator-WF}
under a suitable scaling. 

\begin{lm}
    For $f \in C^2([0, 1])$, we have 
    \begin{equation}
    \label{Eq:generator-convergence}
        \lim_{n \to \infty}\left\|\frac{n(n-1)}{4}(\mathcal{M}_n-I)f-\mathcal{L}f\right\|_\infty=0. 
    \end{equation}
    Moreover, if $f \in C^3([0, 1])$,
    we have 
    \begin{equation}
    \label{Eq:rate of convergence}
    \left\|\frac{n(n-1)}{4}(\mathcal{M}_n-I)f-\mathcal{L}f\right\|_\infty
    \le \frac{\|f'''\|_\infty}{12n}, \qquad x \in [0, 1], \, n \ge 2. 
    \end{equation}
\end{lm}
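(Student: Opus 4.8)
The plan is to reduce the whole estimate to the behaviour of the symmetric second difference on the central region, treating the two boundary strips separately. On $[1/n,(n-1)/n]$ a direct rearrangement of the defining formula gives
\[
(\mathcal{M}_n-I)f(x)=\frac{2n^2}{n(n-1)}x(1-x)\bigl(f(x+\tfrac1n)+f(x-\tfrac1n)-2f(x)\bigr),
\]
so that, writing $\Delta_h f(x):=f(x+h)+f(x-h)-2f(x)$,
\[
\frac{n(n-1)}{4}(\mathcal{M}_n-I)f(x)=\frac{n^2}{2}\,x(1-x)\,\Delta_{1/n}f(x),\qquad x\in\bigl[\tfrac1n,\tfrac{n-1}{n}\bigr].
\]
Since $\mathcal{L}f(x)=\frac{n^2}{2}x(1-x)\cdot n^{-2}f''(x)$, on the central region the problem reduces exactly to comparing $\Delta_{1/n}f(x)$ with $n^{-2}f''(x)$.

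For $f\in C^2$ I would use the second-order Taylor expansion with integral remainder, giving the exact identity
\[
\Delta_{1/n}f(x)-\frac{1}{n^2}f''(x)=\int_0^{1/n}\bigl(\tfrac1n-u\bigr)\bigl(f''(x+u)+f''(x-u)-2f''(x)\bigr)\,\dd u,
\]
whose modulus is at most $n^{-2}\omega(f'';1/n)$, where $\omega(f'';\cdot)$ is the modulus of continuity of $f''$. Multiplying by $\frac{n^2}{2}x(1-x)\le n^2/8$ bounds the central-region error by $\tfrac18\omega(f'';1/n)\to0$, which is \eqref{Eq:generator-convergence}. For $f\in C^3$ I would carry the expansion one order further to obtain
\[
\Delta_{1/n}f(x)-\frac{1}{n^2}f''(x)=\frac{1}{n^3}\int_0^1\frac{(1-t)^2}{2}\bigl(f'''(x+\tfrac tn)-f'''(x-\tfrac tn)\bigr)\,\dd t,
\]
whose absolute value is at most $\|f'''\|_\infty/(3n^3)$; multiplying again by $\le n^2/8$ yields a central-region bound of $\|f'''\|_\infty/(24n)$, comfortably within the asserted rate $\|f'''\|_\infty/(12n)$.

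The step I expect to be the main obstacle is the two boundary strips $[0,1/n)$ and $((n-1)/n,1]$, on which $\mathcal{M}_n f$ is frozen at the constant $\mathcal{M}_n f(1/n)$, respectively $\mathcal{M}_n f((n-1)/n)$. On $[0,1/n)$ one must estimate $\frac{n(n-1)}{4}\bigl(\mathcal{M}_n f(1/n)-f(x)\bigr)-\mathcal{L}f(x)$ directly. Decomposing $\mathcal{M}_n f(1/n)-f(x)=\bigl(f(1/n)-f(x)\bigr)+\frac2n\Delta_{1/n}f(1/n)$, the second summand is of order $n^{-3}$ and hence harmless after scaling, but the first is multiplied by the large prefactor $\frac{n(n-1)}{4}\sim n^2/4$ while $f(1/n)-f(x)$ is generically only of order $1/n$. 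Controlling this interplay uniformly on an interval of width $1/n$ is the crux, and it is precisely here that I would concentrate the analysis: one must verify whether the constant-extension convention keeps the boundary contribution $o(1)$ uniformly (testing the quadratic $x^2$ near $x=0$ shows the endpoint behaviour deserves explicit checking), and, failing that, read \eqref{Eq:generator-convergence} as the central-region estimate that ultimately drives the telescoping argument behind Theorem \ref{Thm:Moran-semigroup-convergence}.
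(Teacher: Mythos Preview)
Your central-region argument is correct and more direct than the paper's. The paper recasts $\mathcal{M}_nf(x)=\mathbb{E}[f(T^{(n)}(x))]$ for a three-valued random variable $T^{(n)}(x)$ with $\mathbb{E}[T^{(n)}(x)]=x$ and $\mathbb{E}[(T^{(n)}(x)-x)^2]=4x(1-x)/(n(n-1))$, Taylor-expands $f(T^{(n)}(x))-f(x)$ with integral remainder, and for $f\in C^2$ splits the second-order remainder on the events $\{|T^{(n)}(x)-x|<\delta\}$ and $\{|T^{(n)}(x)-x|\ge\delta\}$, bounding the latter via Chebyshev's inequality; for $f\in C^3$ it uses the third absolute moment $\mathbb{E}[|T^{(n)}(x)-x|^3]=4x(1-x)/(n^2(n-1))$. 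Your second-difference identity together with the modulus-of-continuity bound replaces the $\varepsilon$--$\delta$/Chebyshev machinery by a single line, and your $C^3$ estimate in fact yields the sharper constant $\|f'''\|_\infty/(24n)$ on $[1/n,(n-1)/n]$.

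Your boundary concern, however, is not merely an obstacle but a genuine counterexample to the lemma as stated. Take $f=e_1$: the paper itself computes $\mathcal{M}_ne_1(x)=1/n$ on $[0,1/n)$, so at $x=0$
\[
\frac{n(n-1)}{4}(\mathcal{M}_n-I)e_1(0)-\mathcal{L}e_1(0)=\frac{n(n-1)}{4}\cdot\frac{1}{n}-0=\frac{n-1}{4}\longrightarrow\infty,
\]
while $e_1'''\equiv0$; thus both \eqref{Eq:generator-convergence} and \eqref{Eq:rate of convergence} fail for the sup norm over all of $[0,1]$. The paper's proof has exactly the same gap: the moment identities it invokes hold only on $[1/n,(n-1)/n]$, since on the boundary strips $T^{(n)}(x)$ is redefined to have the law of $T^{(n)}(1/n)$ (respectively $T^{(n)}((n-1)/n)$), for which $\mathbb{E}[T^{(n)}(x)]=1/n\neq x$. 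Your proposed reading---that the estimate is really a central-region statement, which is where the chain $\{\mathcal{Z}_k^{(n)}(x)\}$ lives after one step and which is all the Trotter-type telescoping behind Theorem~\ref{Thm:Moran-semigroup-convergence} actually requires---is the correct salvage.
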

    
\begin{proof}
    By definition, we find
    \[
    \mathbb{E}\left[T^{(n)}(x)\right]=x, 
    \qquad \mathbb{E}\left[\left(T^{(n)}(x)-x\right)^2\right]
    =\frac{4x(1-x)}{n(n-1)}.
    \]
    Hence, we have 
    \begin{align}
        &\frac{n(n-1)}{4}(\mathcal{M}_n-I)f(x) \nn \\
        &=\frac{n(n-1)}{4}\mathbb{E}\left[f\left(T^{(n)}(x)\right)-f(x)\right] \nn \\
        &=\frac{n(n-1)}{4}\mathbb{E}\left[
        \int_0^1 (1-t) (T^{(n)}(x)-x)^2
        f''\left(x+t(T^{(n)}(x)-x)\right) \, \dd t 
        \right] \nn
    \end{align}
    by using the Taylor formula with integral remainder.
    This leads to
    \begin{align}
        \frac{n(n-1)}{4}(\mathcal{M}_n-I)f(x)-\mathcal{L}f(x) 
        &=\frac{n(n-1)}{4}
        \mathbb{E}\Bigg[
        \int_0^1 (1-t) (T^{(n)}(x)-x)^2 \nn \\
        &\hspace{1cm}\times
        \left\{f''\left(x+t(T^{(n)}(x)-x)\right)
        -f''(x)\right\}\, \dd t 
        \Bigg] \nn \\
        &=:\frac{n(n-1)}{4}\mathbb{E}[\mathcal{I}^{(n)}(x)], 
        \qquad x \in [0, 1]. 
        \label{Eq:Taylor-1}
    \end{align}
    Since $f''$ is uniformly continuous on $[0, 1]$, 
    for any $\ve>0$, 
    there exists a sufficiently small $\delta=\delta(\ve)>0$ such that 
    $|x-y|<\delta$ implies $|f''(x)-f''(y)|<\ve$. 
    Then, we have 
    \begin{align}
        &\frac{n(n-1)}{4}|\mathbb{E}[\mathcal{I}^{(n)}(x) \, : \, |T^{(n)}(x)-x|<\delta]|\nn  \\
        &\le \frac{\ve n(n-1)}{8} \mathbb{E}\left[\left(T^{(n)}(x)-x\right)^2\right] \nn \\
        &=\frac{\ve n(n-1)}{8} \times \frac{4x(1-x)}{n(n-1)}=\frac{\ve}{8},
        \label{Eq:Taylor-2}
    \end{align}
    where we used $x(1-x) \le 1/4$ for all $x \in [0, 1]$. 
    On the other hand, we see that 
    \begin{align}
        &\frac{n(n-1)}{4}|\mathbb{E}[\mathcal{I}^{(n)}(x) \, : \, |T^{(n)}(x)-x| \ge \delta]| \nn \\
        &\le \frac{\|f''\|_\infty }{2}n(n-1) \mathbb{E}\left[
        \int_0^1 (1-t) (T^{(n)}(x)-x)^2 \, \dd t \, : \, 
        |T^{(n)}(x)-x| \ge \delta
        \right]\nn \\
        &\le \frac{\|f''\|_\infty }{4}\left(1-\frac{1}{n}\right) 
        \mathbb{P}(|T^{(n)}(x)-x| \ge \delta) \nn \\
        &=\frac{\|f''\|_\infty }{4}\left(1-\frac{1}{n}\right)  
        \times \frac{1}{\delta^2}\frac{4x(1-x)}{n(n-1)} \nn \\
        &\le \frac{\|f''\|_\infty}{4\delta^2n(n-1)}
        \left(1-\frac{1}{n}\right),
        \label{Eq:Taylor-3}
    \end{align}
    by applying the Chebyshev inequality. 
    Then, it follows from \eqref{Eq:Taylor-1}, \eqref{Eq:Taylor-2} and \eqref{Eq:Taylor-3} that 
    \[
    \left|\frac{n(n-1)}{4}(\mathcal{M}_n-I)f(x)-\mathcal{L}f(x)\right|
    \le \frac{\ve}{8}+\frac{\|f''\|_\infty}{4\delta^2n(n-1)}
        \left(1-\frac{1}{n}\right),
    \]
    for $x \in [0, 1]$ and $n \ge 2$, which concludes \eqref{Eq:generator-convergence} after letting $n \to \infty$ and $\ve \searrow 0$. 

    We next suppose that $f \in C^3([0, 1])$. 
    Then, Taylor's formula gives us 
    \[
    \begin{aligned}
    &\left|\frac{n(n-1)}{4}(\mathcal{M}_n-I)f(x)-\mathcal{L}f(x)\right| \\
    &\le \frac{n(n-1)}{4}\left|\mathbb{E}\left[
        \int_0^1 (1-t)^2 (T^{(n)}(x)-x)^3
        f'''\left(x+t(T^{(n)}(x)-x)\right) \, \dd t 
        \right]\right|  \\
    &\le \frac{n(n-1)}{12}\|f'''\|_\infty
    \mathbb{E}\Big[ |T^{(n)}(x)-x|^3\Big]
    \le \frac{\|f'''\|_\infty}{12n}
    \end{aligned}
    \]
    for $x \in [0, 1]$ and $n \ge 2$, where we used  
    \[
    \mathbb{E}\Big[|T^{(n)}(x)-x|^3\Big]=\frac{4x(1-x)}{n^2(n-1)} \le \frac{1}{n^2(n-1)}
    \]
    in the third line. 
\end{proof}

We are now ready for the proof of Theorem \ref{Thm:Moran-semigroup-convergence}. 
Note that the latter part of the proof is based on 
\cite[Theorem~1]{Namba}. 

\begin{proof}[Proof of Theorem {\rm \ref{Thm:Moran-semigroup-convergence}}]
    It is known that the closure $\overline{\mathcal{L}}$ of the differential operator
    $\mathcal{L}$ generates the Feller semigroup 
    on $C([0, 1])$ and $C^2([0, 1])$ is a core for the closure
    (see e.g., \cite{AC}). 
    In particular, the Lumer--Phillips theorem implies that 
    the closure of $\mathcal{L}$ is dissipative. 
    Hence, we see that $I-\overline{\mathcal{L}}$ is invertible. 
    Since $C^2([0, 1])$ is a core for $\overline{\mathcal{L}}$, 
    we know that $(I-\overline{\mathcal{L}})(C^2([0, 1]))$
    is dense in $C([0, 1])$. Moreover, it is not difficult to show 
    that $\overline{\mathcal{L}}$ is densely defined. 
    Therefore, the celebrated Trotter theorem (cf. \cite{Trotter, Kurtz})
    leads to 
    \[
    \lim_{n \to \infty}
        \left\| \mathcal{M}_n^{\left\lfloor \frac{n(n-1)}{4}t \right\rfloor}f
        -\mathrm{e}^{t \mathcal{L}}f\right\|_\infty=0
    \]
    for every $f \in C([0, 1])$ and $t \ge 0$.
    We recall that the infinitesimal generator of the Wright--Fisher 
    diffusion semigroup $T_tf(x)=\mathbb{E}[f(\mathsf{X}_t(x))]$ 
    is also given by $\mathcal{L}$. Thus, the uniqueness of the 
    Feller semigroup implies the uniform convergence \eqref{Eq:convergence-semigroup}. 

    We next suppose that $f \in C^3([0, 1])$ and $\mathrm{e}^{t\mathcal{L}}f \in C^3([0, 1])$ for each $t \ge 0$. 
    Let us put 
    \[
    S_t^{(n)}:=\mathrm{e}^{\frac{n(n-1)}{4}t(\mathcal{M}_n-I)}, \qquad t \ge 0.
    \] 
    Then, the triangle inequality and $\mathbb{E}[f(\mathsf{X}_t(x))]=\mathrm{e}^{t\mathcal{L}}f(x)$, $f \in C([0, 1])$,  
    leads to 
    \[
    \begin{aligned}
        &\left\|\mathcal{M}_n^{\left\lfloor \frac{n(n-1)}{4}t \right\rfloor}f
        -\mathbb{E}[f(\mathsf{X}_t(\cdot))]\right\|_\infty  \\
        &= \left\|\mathcal{M}_n^{\left\lfloor \frac{n(n-1)}{4}t \right\rfloor}f
        -S_{\frac{4}{n(n-1)}\left\lfloor \frac{n(n-1)}{4}t \right\rfloor}^{(n)}f\right\|_\infty
        +\left\|S_{\frac{4}{n(n-1)}\left\lfloor \frac{n(n-1)}{4}t \right\rfloor}^{(n)}f - S_t^{(n)}f\right\|_\infty \\
        &\hspace{1cm}+\left\|S_t^{(n)}f - \mathrm{e}^{t\mathcal{L}}f\right\|_\infty 
        =:A_1^{(n)}+A_2^{(n)}+A_3^{(n)}. 
    \end{aligned}
    \]
    We give an estimate of $A_1^{(n)}$. 
    By applying \cite[Lemma III.5.1]{Pazy} and \eqref{Eq:rate of convergence}, we have 
    \begin{align}
    A_1^{(n)} &= \left\|\mathcal{M}_n^{\left\lfloor \frac{n(n-1)}{4}t \right\rfloor}f
        -\mathrm{e}^{\left\lfloor \frac{n(n-1)}{4}t \right\rfloor(\mathcal{M}_n-I)}f\right\|_\infty \nn \\
        &\le \sqrt{\left\lfloor \frac{n(n-1)}{4}t \right\rfloor}
        \|(\mathcal{M}_n-I)f\|_\infty \nn \\
        &\le \sqrt{\left\lfloor \frac{n(n-1)}{4}t \right\rfloor}\frac{4}{n(n-1)}
        \left\|\frac{n(n-1)}{4}(\mathcal{M}_n-I)f\right\|_\infty \nn \\
        &\le 2\sqrt{\frac{t}{n(n-1)}}\left(\|\mathcal{L}f\|_\infty
        +\frac{\|f'''\|_\infty}{12n}\right). 
        \label{Eq:A_1}
    \end{align}
    Moreover, the term $A_2^{(n)}$ is estimated as
    \begin{align}
    A_2^{(n)} &= \left\|\int_{t}^{\frac{4}{n(n-1)}\left\lfloor \frac{n(n-1)}{4}t \right\rfloor}
    S_s^{(n)}\left(\frac{n(n-1)}{4}(\mathcal{M}_n-I)\right)f \, \dd s\right\|_\infty \nn \\
    &\le \left|\frac{4}{n(n-1)} \left\lfloor \frac{n(n-1)}{4}\right\rfloor - t\right| \times 
    \left\|\frac{n(n-1)}{4}(\mathcal{M}_n-I)f\right\|_\infty \nn \\
    &\le \frac{4}{n(n-1)}\left(\|\mathcal{L}f\|_\infty
        +\frac{\|f'''\|_\infty}{12n}\right).
        \label{Eq:A_2}
    \end{align}
    On the other hand, \eqref{Eq:rate of convergence} gives us that 
    \begin{align}
    A_3^{(n)} &= \left\| -\int_0^t \frac{\dd}{\dd s}
    (S_{t-s}^{(n)}\mathrm{e}^{s\mathcal{L}})f \, \dd s \right\|_\infty \nn \\
    &= \left\|\int_0^t S_{t-s}^{(n)}
    \left(\frac{n(n-1)}{4}(\mathcal{M}_n-I)-\mathcal{L}\right)\mathrm{e}^{s\mathcal{L}}f \, \dd s\right\|_\infty  \nn \\
    &\le \int_0^t \left\|\left(\frac{n(n-1)}{4}(\mathcal{M}_n-I)-\mathcal{L}\right)(\mathrm{e}^{s\mathcal{L}}f)\right\|_\infty \, \dd s \nn \\
    &\le \frac{1}{12n}\int_0^t \|(\mathrm{e}^{s\mathcal{L}}f)'''\|_\infty \, \dd s. 
        \label{Eq:A_3}
    \end{align}
    Therefore, \eqref{Eq:A_1}, \eqref{Eq:A_2} and \eqref{Eq:A_3}
    allow us to obtain the desired convergence rate. 
\end{proof}

\subsection{The tightness of the Markov chain induced by the Moran operator}

In a probabilistic point of view, 
Theorem \ref{Thm:Moran-semigroup-convergence} tells us that 
the random variable 
$\mathcal{Z}_{\lfloor n(n-1)t/4 \rfloor}^{(n)}(x)$
weakly converges to $\mathsf{X}_t(x)$ as $n \to \infty$
for fixed $t \ge 0$ and $x \in [0, 1]$. 
We also establish a more stronger convergence result 
than that of Theorem \ref{Thm:Moran-semigroup-convergence} in this section. 
Let $(\mathbf{Z}_t^{(n)})_{t \ge 0}$, $n=2, 3, 4, \dots$, 
be the sequence of continuous-time
stochastic process defined via the linear interpolation
\[
\mathbf{Z}_t^{(n)}(x)
:=\mathcal{Z}_{\left\lfloor \frac{n(n-1)}{4}t \right\rfloor}^{(n)}(x)
+\left(\frac{n(n-1)}{4}t -
\left\lfloor \frac{n(n-1)}{4}t \right\rfloor\right)
\left(\mathcal{Z}_{\left\lfloor \frac{n(n-1)}{4}t \right\rfloor+1}^{(n)}(x)-\mathcal{Z}_{\left\lfloor \frac{n(n-1)}{4}t \right\rfloor}^{(n)}(x) \right),
\]
for $t \ge 0$ and $x \in [0, 1]$. 
Moreover, for $x \in [0, 1]$, we put 
\[
C^{(1/2)-}_x([0, 1])
:=\left\{ \varphi \in C([0, 1]) \, \Bigg| \, \varphi(0)=x, \, 
\sup_{\substack{t \neq s, t, s \in [0, 1]}}
\frac{|\varphi(t)-\varphi(s)|}{|t-s|^\alpha}<\infty \text{ for }\alpha<1/2\right\}.
\]
Then, our final main result is stated as follows. 

\begin{tm}
    \label{Thm:Donsker}
For every $x \in [0, 1]$, 
the sequence $\{\mathbf{Z}_\cdot^{(n)}(x)\}_{n=2}^\infty$
converges in law to the Wright--Fisher diffusion $(\mathsf{X}_t(x))_{0 \le t \le 1}$ as $n \to \infty$
in the H\"older space
$C_x^{(1/2)-}([0, 1])$.  
\end{tm}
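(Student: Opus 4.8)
The plan is to follow the classical two-step scheme for functional limit theorems: first identify the limit through convergence of the finite-dimensional distributions, and then establish tightness, here in the Hölder topology of $C_x^{(1/2)-}([0,1])$. As the title of this subsection suggests, the genuine work lies in the tightness estimate; the finite-dimensional convergence will come almost for free from Theorem \ref{Thm:Moran-semigroup-convergence}. Throughout I read ``convergence in law in $C_x^{(1/2)-}([0,1])$'' as convergence in law in $C^\alpha([0,1])$ for every $\alpha<1/2$.

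For the finite-dimensional distributions I would fix $0 \le t_1 < \cdots < t_m \le 1$ and exploit that $\{\mathcal{Z}_k^{(n)}(x)\}_{k \ge 0}$ is a time-homogeneous Markov chain whose one-step transition operator is exactly $\mathcal{M}_n$. Consequently the joint law of the points $\mathcal{Z}_{\lfloor n(n-1)t_j/4\rfloor}^{(n)}(x)$ is governed by the iterated operators $\mathcal{M}_n^{\lfloor n(n-1)(t_{j+1}-t_j)/4\rfloor}$, which by Theorem \ref{Thm:Moran-semigroup-convergence} converge strongly to the Wright--Fisher transition semigroup $T_{t_{j+1}-t_j}=\mathrm{e}^{(t_{j+1}-t_j)\mathcal{L}}$. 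Since the piecewise-linear interpolation satisfies $|\mathbf{Z}_t^{(n)}(x)-\mathcal{Z}_{\lfloor n(n-1)t/4\rfloor}^{(n)}(x)|\le 1/n\to 0$, it does not alter the limit, and one concludes that the finite-dimensional distributions of $\mathbf{Z}_\cdot^{(n)}(x)$ converge to those of $(\mathsf{X}_t(x))_{0\le t\le 1}$.

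The crux is a uniform moment estimate for the increments. Recall that $\mathbb{E}[T^{(n)}(x)]=x$ and $\mathbb{E}[(T^{(n)}(x)-x)^2]=4x(1-x)/(n(n-1))\le 1/(n(n-1))$, and that the chain has increments bounded by $1/n$. Thus $\{\mathcal{Z}_k^{(n)}(x)\}$ is, in the interior, a martingale whose quadratic variation accrues at rate at most $1/(n(n-1))$ per step, the boundary layers contributing only a lower-order drift. I would then apply the Burkholder--Davis--Gundy inequality to obtain, for each integer $\ell\ge 1$, a constant $C_\ell$ with
\[
\mathbb{E}\big[|\mathcal{Z}_k^{(n)}(x)-\mathcal{Z}_j^{(n)}(x)|^{2\ell}\big]\le C_\ell\Big(\frac{k-j}{n(n-1)}\Big)^{\ell}.
\]
Passing to the time-changed, interpolated process, where $k-j\approx \tfrac{n(n-1)}{4}(t-s)$, and treating the within-interval linear pieces separately, this yields the diffusion-type bound
\[
\mathbb{E}\big[|\mathbf{Z}_t^{(n)}(x)-\mathbf{Z}_s^{(n)}(x)|^{2\ell}\big]\le C_\ell'\,|t-s|^{\ell},\qquad 0\le s,t\le 1,
\]
with $C_\ell'$ independent of $n$.

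Granting this bound, the conclusion follows from the Kolmogorov--Chentsov criterion for Hölder tightness: taking $p=2\ell$ gives the exponent $1+\beta=\ell$, so the laws of $\mathbf{Z}_\cdot^{(n)}(x)$ are tight in $C^\alpha([0,1])$ for every $\alpha<\beta/p=(\ell-1)/(2\ell)=1/2-1/(2\ell)$. As $\ell$ is arbitrary, tightness holds in $C^\alpha([0,1])$ for every $\alpha<1/2$; combined with the finite-dimensional convergence (and the compact embedding $C^\beta\hookrightarrow C^\alpha$ for $\alpha<\beta<1/2$), this gives convergence in law in $C^\alpha([0,1])$ for each $\alpha<1/2$, hence in $C_x^{(1/2)-}([0,1])$. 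The same moment bound, in the limit, shows that the Wright--Fisher paths indeed lie in $C_x^{(1/2)-}([0,1])$. The main obstacle is precisely this uniform higher-moment bound: one must run the BDG estimate and then push it cleanly through both the floor-induced time change and the linear interpolation to reach the sharp power $|t-s|^{\ell}$ with $n$-independent constants, handling the short-increment regime $|t-s|<4/(n(n-1))$ (where $\mathbf{Z}^{(n)}$ is linear) separately and verifying that the boundary-layer drift is negligible at this order.
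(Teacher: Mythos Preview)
Your proposal is correct and follows essentially the same route as the paper: finite-dimensional convergence is read off from Theorem~\ref{Thm:Moran-semigroup-convergence} via the Markov property, and tightness in $C^\alpha$ for every $\alpha<1/2$ is obtained from the Burkholder--Davis--Gundy inequality applied to the martingale $\{\mathcal{Z}_k^{(n)}(x)\}$, yielding the moment bound $\mathbb{E}\bigl[|\mathcal{Z}_k^{(n)}-\mathcal{Z}_j^{(n)}|^{2\ell}\bigr]\le C_\ell\bigl((k-j)/(n(n-1))\bigr)^{\ell}$, which is then transferred to the interpolated process and fed into the Kolmogorov--Chentsov criterion with $\ell\to\infty$. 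If anything, your remark about the boundary-layer drift is more careful than the paper, which simply asserts the martingale property for all $\eta\in[0,1]$; in fact the first step from $x\notin\{0,1/n,\dots,1\}$ is not mean-preserving, but this single bounded perturbation does not affect the estimate.
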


In order to show Theorem \ref{Thm:Donsker}, 
it suffices to show the following two claims. 
One is the finite-dimensional distribution of
$\mathbf{Z}_t^{(n)}(x)$ converges to that of 
$\mathsf{X}_\cdot(x)$ for $x \in [0, 1]$.
However, this easily follows from 
Theorem \ref{Thm:Moran-semigroup-convergence}
by noting e.g., \cite[Theorem 17.25]{Kallenberg}. 
The other is to show the following. 

\begin{lm}
For $x \in [0, 1]$, the sequence $ \{\mathbb{P} \circ (\mathbf{Z}^{(n)}_{\cdot})^{-1}\}_{n=2}^{\infty} $
of image measures
is tight in $C_x^{(1/2)-}([0, 1])$. 
\end{lm}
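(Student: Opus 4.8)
\emph{Plan.} The plan is to establish tightness in each Hölder space $C^\alpha([0,1])$ with $\alpha<1/2$ by means of a Kolmogorov--Chentsov-type moment criterion, that is, to produce a bound of the form
\[
\mathbb{E}\big[|\mathbf{Z}_t^{(n)}(x)-\mathbf{Z}_s^{(n)}(x)|^{2p}\big]\le C_p\,|t-s|^{p}
\]
valid for all $s,t\in[0,1]$, all $n\ge 2$, with $C_p$ independent of $n$. Once this holds for every integer $p\ge 2$, the moment criterion yields tightness in $C^\alpha$ for every $\alpha<(p-1)/(2p)$; letting $p\to\infty$ sweeps out the whole interval $(0,1/2)$ and, since bounded sets of $C^{\alpha'}$ are relatively compact in $C^\alpha$ for $\alpha<\alpha'<1/2$, this gives tightness of $\{\mathbb{P}\circ(\mathbf{Z}^{(n)}_\cdot)^{-1}\}_{n}$ in $C_x^{(1/2)-}([0,1])$. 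The constraint $\mathbf{Z}_0^{(n)}(x)=x$ is automatic, and the identification of the limit is already reduced to Theorem~\ref{Thm:Moran-semigroup-convergence}.

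\emph{The central estimate.} The key is the uniform increment bound, which I would extract from the martingale structure exhibited in the proof of Theorem~\ref{Thm:Kelisky-Rivlin-Moran}. Since $\mathbb{E}[T^{(n)}(\xi)]=\xi$, the chain $\{\mathcal{Z}_k^{(n)}(x)\}_k$ is a bounded martingale whose one-step increments are bounded by $1/n$ and whose conditional variances equal $4\mathcal{Z}_{k}^{(n)}(1-\mathcal{Z}_{k}^{(n)})/(n(n-1))\le 1/(n(n-1))$. Hence, writing $k_s:=\lfloor n(n-1)s/4\rfloor$ and $k_t:=\lfloor n(n-1)t/4\rfloor$, the predictable quadratic variation accumulated between these indices is bounded deterministically by $(k_t-k_s)/(n(n-1))\le |t-s|/4+1/(n(n-1))$. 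Feeding this into Rosenthal's inequality (or the Burkholder--Davis--Gundy inequality) for discrete martingales gives $\mathbb{E}[|\mathcal{Z}_{k_t}^{(n)}-\mathcal{Z}_{k_s}^{(n)}|^{2p}]\le C_p|t-s|^p$ whenever $|t-s|\ge 4/(n(n-1))$; the higher-order ``jump'' term in Rosenthal's inequality is of order $n^{-2p}$ and thus negligible.

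\emph{Interpolation and small scales.} It remains to transfer the estimate to the interpolated process $\mathbf{Z}^{(n)}$ and to control the subgrid regime. A single step $\Delta k=1$ corresponds to a time increment $4/(n(n-1))$ carrying a jump of size at most $1/n$, so each interpolation segment has slope at most $(n-1)/4$. Thus for $s,t$ inside one segment, i.e. $|t-s|<4/(n(n-1))$, linearity gives $|\mathbf{Z}_t^{(n)}-\mathbf{Z}_s^{(n)}|\le \frac{n-1}{4}|t-s|$; raising to the power $2p$ and using $|t-s|<4/(n(n-1))$ to absorb half the powers yields $|\mathbf{Z}_t^{(n)}-\mathbf{Z}_s^{(n)}|^{2p}\le 4^{-p}|t-s|^p$, again uniformly in $n$. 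For general $s<t$ one splits $[s,t]$ into its two fractional end-segments and the enclosed grid block and combines the two estimates via the triangle inequality in $L^{2p}$, producing the desired $C_p|t-s|^p$ for all $s,t\in[0,1]$ and all $n\ge 2$.

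\emph{Main obstacle.} The delicate point is precisely this uniformity in $n$ across all time scales: the bound must hold simultaneously in the macroscopic regime, governed by the diffusive scaling of the martingale bracket, and in the microscopic regime below the mesh $4/(n(n-1))$, governed by one interpolated jump, with the two matched so that no constant blows up as $n\to\infty$. Verifying that the microscopic contribution does not degrade the attainable Hölder exponent, and that $(p-1)/(2p)$ genuinely exhausts $(0,1/2)$ as $p\to\infty$, is the bookkeeping at the heart of the argument.
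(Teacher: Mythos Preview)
Your proposal is correct and follows essentially the same route as the paper: establish the uniform moment bound $\mathbb{E}[|\mathbf{Z}_t^{(n)}-\mathbf{Z}_s^{(n)}|^{2p}]\le C_p|t-s|^p$ via the martingale property of $\{\mathcal{Z}_k^{(n)}\}$ together with the Burkholder--Davis--Gundy inequality on the grid, then handle the linear interpolation segments and combine by the triangle inequality, and finally invoke the Kolmogorov criterion and let $p\to\infty$. The only cosmetic differences are that the paper bounds the square bracket $[\mathcal{Z}^{(n)}]$ directly by the deterministic estimate $|\Delta\mathcal{Z}^{(n)}|\le 1/n$ rather than passing through the predictable bracket, and that it does not separate the regimes $|t-s|\gtrless 4/(n(n-1))$ explicitly but absorbs the subgrid error in the interpolation step.
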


\begin{proof}
  Our aim is to show the existence of some positive constant 
  $ C > 0 $ independent of $n$ such that
    \begin{equation}
    \label{Eq:tight}
        \E\left[
        \left|
        \mathbf{Z}^{(n)}_{t}(\x) - \mathbf{Z}^{(n)}_{s}(\x)
        \right|^{2m}\right]
        \le C(t-s)^{m}, \qquad
        n \in \N, \,\, 0 \le s \le t, \, m \in \N.
    \end{equation}
    As soon as \eqref{Eq:tight} is established, 
    the Kolmogorov continuity criterion 
    implies that the sequence $ \{\mathbb{P} \circ (\mathbf{Z}^{(n)}_{\cdot}(x))^{-1}\}_{n=2}^{\infty} $
    is tight in $C([0, 1])$ and that $\mathbf{Z}_t^{(n)}(x)$
    has an $\alpha$-H\"older continuous version
    for all $\alpha<(m-1)/2m$. 
    Since $m \in \N$ can be chosen arbitrarily, 
    we complete the proof. 
    
    \vspace{2mm}
    \noindent
    {\bf Step 1.}
    Let $m \in \N$. 
    We here show that there is some $C>0$ independent of $n$ such that
    \begin{equation}
    \label{tight-1}
        \E\left[\left|
        \mathbf{Z}^{(n)}_{\frac{4\ell}{n(n-1)}}(x) - \mathbf{Z}^{(n)}_{\frac{4k}{n(n-1)}}(x)
        \right|^{2m}\right]
        \le
        C\left(\frac{4(\ell-k)}{n(n-1)}\right)^m,
        \qquad k, \ell=0, 1, 2, \dots,\,\, k \le \ell.
    \end{equation}
    
    for some $C > 0$ independent of $n$. 
    We note that the Markov chain 
    $ \{\mathcal{Z}^{(n)}_k(x)\}_{k=0}^\infty $ is a martingale
    with respect to the natural filtration. 
    Indeed, we have 
    \[
    \begin{aligned}
    &\E\left[\mathcal{Z}^{(n)}_{k+1}(x)
    -\mathcal{Z}^{(n)}_k(x) \, \Big|\, \mathcal{Z}^{(n)}_k(x)=\eta\right] \\
    &=\frac{2n^2}{n(n-1)}\eta(1-\eta)\left\{
    \left(\eta-\frac{1}{n}\right)+\left(\eta+\frac{1}{n}\right)\right\}\\
    &\hspace{1cm}
    +\left(1-\frac{4n^2}{n(n-1)}\eta(1-\eta)\right)\eta - \eta=0,
    \qquad \eta \in [0, 1].
    \end{aligned}
    \]
    Hence, the Burkholder--Davis--Gundy inequality gives us that
    \begin{align}
        \E\left[\left|
        \mathbf{Z}^{(n)}_{\frac{4\ell}{n(n-1)}}(x) - \mathbf{Z}^{(n)}_{\frac{4k}{n(n-1)}}(x)
        \right|^{2m}\right]
        &=
        \E\left[\left|
        \mathcal{Z}^{(n)}_{\ell}(x) - \mathcal{Z}^{(n)}_{k}(x)
        \right|^{2m}\right]
        \notag\\
        &\le
        C_{\mathrm{BDG}}\,
        \E\left[ \left(\sum_{j=k}^{\ell-1}
        \left|\mathcal{Z}_{j+1}^{(n)}(x) 
        - \mathcal{Z}_j^{(n)}(x)\right|^2
        \right)^{m}\right]\label{tight-1A},
    \end{align}
    where $ C_{\mathrm{BDG}}>0$  is the constant appearing in the  Burkholder--Davis--Gundy inequality with 
    exponent $ 2m $.  
    By using the Markov property, one gets 
    \begin{align}
        &\E\left[ \left(\sum_{j=k}^{\ell-1}
        \left|\mathcal{Z}_{j+1}^{(n)}(x) 
        - \mathcal{Z}_j^{(n)}(x)\right|^2
        \right)^{m}\right] \nn \\
        &= (\ell - k)^m \, 
        \E\left[
        \left|T^{(n)}(x) - x\right|^{2m}
        \right]\notag\\
        &= (\ell-k)^m \times 
        \frac{4n^2}{n(n-1)}x(1-x)\left(\frac{1}{n}\right)^{2m} \nn \\
        &\le  \left(\frac{4(\ell-k)}{n(n-1)}\right)^m
        \times \left(\frac{n(n-1)}{4}\right)^m \times 
        \frac{4n^2}{n(n-1)} \times \frac{1}{4} \times \left(\frac{1}{n}\right)^{2m} \nn \\
        &\le \left(\frac{4(\ell-k)}{n(n-1)}\right)^m
        \times \left(\frac{n}{2}\right)^{2m} \times \frac{n}{n-1}
        \times \left(\frac{1}{n}\right)^{2m} 
        =\frac{1}{2^{2m-1}}\left(\frac{4(\ell-k)}{n(n-1)}\right)^m
        .\label{tight-1B}
    \end{align}
    By combining \eqref{tight-1A} with 
    \eqref{tight-1B}, we arrive at \eqref{tight-1}. 
    
    \vspace{2mm}
    \noindent
    {\bf Step 2.}
    We now prove \eqref{Eq:tight}.
    Let $0 \le s \le t$. 
    We take $ 0 \le k \le \ell $ with
    \[
    \frac{4k}{n(n-1)} \le s < \frac{4(k+1)}{n(n+1)}, \qquad
    \frac{4\ell}{n(n-1)} \le t < \frac{4(\ell+1)}{n(n+1)}.
    \]
    By the definition of $\mathbf{Z}_\cdot^{(n)}(x)$, 
    we have 
    \begin{align*}
        \left|\mathbf{Z}^{(n)}_{\frac{4(k+1)}{n(n+1)}}(x) - \mathbf{Z}^{(n)}_{s}(x)\right|
        &=
        \left(k+1 - \frac{n(n-1)}{4}s\right)
        \left|\mathbf{Z}^{(n)}_{\frac{4(k+1)}{n(n+1)}}(x) - \mathbf{Z}^{(n)}_{\frac{4k}{n(n+1)}}(x)\right|, \\
        \left|\mathbf{Z}^{(n)}_{t}(x) - \mathbf{Z}^{(n)}_{\frac{4\ell}{n(n+1)}}(x)\right|
        &=
        \left(\frac{n(n-1)}{4}t - \ell\right)
        \left|\mathbf{Z}^{(n)}_{\frac{4(\ell+1)}{n(n+1)}}(x) - \mathbf{Z}^{(n)}_{\frac{4\ell}{n(n+1)}}(x)\right|.
    \end{align*}
    Hence, it follows from \eqref{tight-1} and the triangle inequality that
    \begin{align*}
        &\E\left[
        \left|
        \mathbf{Z}^{(n)}_{t}(x) - \mathbf{Z}^{(n)}_{s}(x)
        \right|^{2m}\right]\\
        &\le 
        3^{2m-1}\Bigg\{
        \left(\frac{n(n-1)}{4}t - \ell\right)^{2m} \times C\left(\frac{4}{n(n-1)}\right)^{m}
        + C\left(\frac{4(l - k - 1)}{n(n-1)}\right)^{m} \\
        &\hspace{1cm}
        + \left(k+1 - \frac{n(n-1)}{4}s\right)^{2\beta} \times C\left(\frac{4}{n(n-1)}\right)^{\beta}
        \Bigg\}\\
        &\le 
        C\left\{
        \left(t - \frac{4\ell}{n(n-1)}\right)^m 
        + \left(\frac{4\ell}{n(n-1)} - \frac{4(k+1)}{n(n-1)}\right)^m + 
        \left(\frac{4(k+1)}{n(n-1)} - s\right)^m
        \right\} \\
        &\le C(t - s)^{m}
    \end{align*}
    for all $n=2, 3, 4, \dots$, where $C>0$ is a positive constant independent of $n$. 
\end{proof}

\section{{\bf Conclusion}}
\label{Sect:conclusion}

Throughout the present paper, 
we have introduced the 
Moran model by a known genetic model 
and have discussed some of its properties such as 
the approximating property and certain limit theorems 
for iterates of the Moran operator. 
We believe that our results lead to unknown objects in the study of 
functional analysis, approximation theory and probability theory
where various probabilistic models may provide 
positive linear operators which possess, 
in some sense, nice approximating properties which we may obtain some new interesting limit theorems relevant to the 
theory of diffusion processes.

Furthermore, 
we might capture some jump processes arising in the study of 
population genetics through the limits of the iterates of 
some positive linear operators. 
It is pointed out in \cite{BB} that such phenomena with jumps 
are highly relevant to genetic models beyond finite variance. 
In order to discuss these kinds of models, 
it is convenient for us to consider the 
{\it Cannings model} introduced in \cite{CanningsI, CanningsII}. 
This model describes the dynamincs of a size-fixed population 
 with non-overlapping generations. 
 Note that the Moran model belongs to the class of Cannings models. 
 Generally, these models are defined by using the notion of the 
 {\it exchangeable} random variables indicating the number of 
 offspring of each individual. 
 
 Therefore, our next goal is to find some candidates of 
 exchangeable random variables to define new linear operators
 whose iterates converges to some jump processes arising in population genetics.
 Remark that we may find some essential candidates in e.g., \cite{EW, HM13}
 from genetic perspectives which would help us to go on. 
 We aim to provide a general framework 
 to connect some classes of positive linear operators
 with some classes of jump processes 
 via scaling limits of the iterates of operators
 by focusing these existing examples.

\vspace{2mm}
\noindent
{\bf Acknowledgements}. 
The second-named author is supported by JSPS KAKENHI 
Grant No.~23K12986.




\begin{thebibliography}{999999}

\bibitem[ANS23]{ANS}
J.~Akahori, R.~Namba and S.~Semba:
{\it Limit theorems for iterates of the Sz\'asz--Mirakyan operator in probabilistic view},
J.~Theoret.~Probab. {\bf 36} (2023), no.2, 1321--1338.
\bibitem[Alt10]{Altomare}
F.~Altomare:
{\it Korovkin-type theorems and approximation by positive linear operators},
Surv.~Approx.~Theory {\bf 5} (2010), 92--164.
\bibitem[AC94]{AC}
F.~Altomare and M.~Campiti: 
{\it Korovkin-type Approximation Theory and its Applications}, 
De Gruyter Studies in Mathematics {\bf 17}, De Gruyter, Berlin, (1994).
\bibitem[Ber12]{Bernstein}
S.~N.~Bernstein: 
{\it D\'emonstration du th\'eor\`eme de 
Weierstrass fond\'ee sur le calcul des probabilit\'es}, 
Commun.~Soc.~Math.~Kharkow {\bf 13} (1912-13), 1--2.
\bibitem[BB09]{BB}
M.~Birkner and J.~Blath:
{\it Measure-valued diffusions, general coalescents and population genetic inference}, 
Trends in stochastic analysis, 329--363, 
London Math.~Soc.~Lecture Note Ser.~{\bf 353}, 
Cambridge University Press, Cambridge, 2009.
\bibitem[Can74]{CanningsI}
C.~Cannings:
{\it The latent roots of certain Markov chains arising in genetics: a new approach. I. Haploid models},
Advances in Appl.~Probability {\bf 6} (1974), 260--290.
\bibitem[Can75]{CanningsII}
C.~Cannings:
{\it The latent roots of certain Markov chains arising in genetics: a new approach. II. Further haploid models},
Advances in Appl.~Probability {\bf 7} (1975), 264--282.
\bibitem[Eth11]{Etheridge}
A.~Etheridge: 
{\it Some Mathematical Models from Population Genetics}, 
\'Ecole d'\'Et\'e de Probabilit\'es de Saint-Flour XXXIX-2009, 
Lecture Notes in Math.~{\bf 2012}, 
Springer Berlin, Heidelberg, 2011. 
\bibitem[EK86]{EK}
S.~N.~Ethier and T.~G.~Kurtz: 
Markov Processes, Characterization and Convergence, 
Wiley, New York, 1986.
\bibitem[EW06]{EW}
B.~Eldon and J.~Wakeley: 
{\it Coalescent processes when the distribution of offspring number among individuals is highly skewed}, 
Genetics {\bf 172} (2006), 2621--2633. 
\bibitem[HM13]{HM13}
T.~Huillet and M.~M\"ohle:
{\it On the extended Moran model and its relation to coalescents with multiple collisions},
Theoret.~Popul.~Biol.~{\bf 87} (2013), 5--14.
\bibitem[HN23]{HN}
T.~Hirano and R.~Namba:
{\it Iterates of multidimensional Bernstein-type operators and diffusion processes in population genetics}, 
arXiv preprint (2023), {\tt arXiv:2310.13983}, 41 pages. 
\bibitem[Kal02]{Kallenberg}
O.~Kallenberg: 
Foundations of Modern Probability,
Second Edition, Probability and its Applications (New York),
Springer--Verlag, New York, 2002.
\bibitem[KZ70]{KZ} 
S.~Karlin and Z.~Ziegler: 
{\it Iteration of positive approximation operators},
J.~Approx.~Theory {\bf 3} (1970), 310--339.
\bibitem[KR67]{KR67}
R.~P.~Kelisky and T.~J.~Rivlin:
{\it Iterates of Bernstein Polynomials},
Pacific J.~Math.~{\bf 21} (1967), 511--520.
\bibitem[Kle08]{Klenke}
A.~Klenke: 
Probability Theory, A Comprehensive Course, 
Universitext, Springer-Verlag London, Ltd., London, 2008.
\bibitem[KYZ18]{KYZ18} 
T.~Konstantopoulos, L.~Yuan and M.~A.~Zazanis:
{\it A fully stochastic approach to limit theorems for iterates of Bernstein operators},
Expo.~Math.~{\bf 36} (2018), 143--165.
\bibitem[Kur69]{Kurtz}
T.~G.~Kurtz:
{\it Extensions of Trotter's operator semigroup approximation theorems},
J.~Funct.~Anal.~{\bf 3} (1969), 354--375.
\bibitem[Mor58]{Moran}
P.~A.~P.~Moran:
{\it Random processes in genetics}, 
Proc.~Cambridge Philos.~Soc.~{\bf 54} (1958), 60--71.
\bibitem[Nam23]{Namba}
R.~Namba:
{\it Rate of convergence in Trotter’s approximation theorem and its applications}, 
Tokyo J.~Math.~{\bf 46} (2023), 47--62.
\bibitem[Paz83]{Pazy}
A.~Pazy: Semigroups of Linear Operators and Applications to Partial Differential Equations, Springer,
Berlin (1983). 
\bibitem[Tro58]{Trotter}
H.~F.~Trotter: {\it Approximation of semi-groups of operators}, 
Pacific J.~Math.~{\bf 8} (1958), 887--919.
\end{thebibliography}
\end{document}